\setlist[itemize]{leftmargin=*}
\setlist[enumerate]{leftmargin=*}
\newtheorem{theorem}{Theorem}[section]
\newtheorem{lemma}{Lemma}[section]
\newtheorem{definition}{Definition}[section]
\newtheorem{proposition}{Proposition}[section]
\newtheorem{remark}{Remark}[section]
\newtheorem{example}{Example}[section]
\newlist{casenv}{enumerate}{1}
\setlist[casenv]{leftmargin=*,align=left,widest={iii}}
\setlist[casenv,1]{label=\roman{casenvi})}
\newcommand{\R}{\mathbb{R}}
\newcommand{\Z}{\mathbb{Z}}
\newcommand{\C}{\mathbb{C}}
\newcommand{\Q}{\mathbb{Q}}
\newcommand{\HH}{\mathbb{H}}
\newcommand{\OO}{\mathcal{O}}
\newcommand{\Nc}{\mathcal{N}}
\DeclareMathOperator{\Aut}{Aut}
\DeclareMathOperator{\Aff}{Aff}
\DeclareMathOperator{\Norm}{N}
\DeclareMathOperator{\Trace}{Tr}
\newcommand\blfootnote[1]{%
  \begin{NoHyper}%
  \renewcommand\thefootnote{}\footnote{#1}%
  \addtocounter{footnote}{-1}%
  \end{NoHyper}%
}
\begin{document}

\title{Automorphism groups of Inoue surfaces $S^{(+)}/S^{(-)}$}
\author{David Petcu}
\address{
		\textsc{\indent University of Bucharest, Faculty of Mathematics and Computer Science\newline 
			\indent 14 Academiei Str., Bucharest, Romania \newline
			\indent \indent and \newline
			\indent Institute of Mathematics ``Simion Stoilow'' of the Romanian Academy\newline 
			\indent 21 Calea Grivitei Street, 010702, Bucharest, Romania}}
            \keywords{Non-K\"ahler manifolds, automorphisms of manifolds, quadratic number fields}
            \subjclass{53C55, 22E25, 32J18}

\begin{abstract}
We present the construction of Inoue surfaces of type $S^{(+)}/S^{(-)}$ in terms of data arising from real quadratic number fields. We then describe the automorphism group of such surfaces in terms of this data.
\end{abstract}

\blfootnote{{\bf Acknowledgements.} The author wishes to express their sincere gratitude to Victor Vuletescu for their guidance and feedback throughout the course of this research. The author is also grateful to Andrei Teleman and Vicen\c tiu Pa\c sol for the insightful discussions on the subject.\\
The present work is partly supported by the PNRR-III-C9-2023-I8 grant CF 149/31.07.2023 {\em Conformal Aspects of Geometry and Dynamics}. }

\maketitle

\section{Introduction}

Inoue surfaces were introduced by Inoue as examples of minimal  non-Kahler compact complex surfaces with Kodaira dimension $-\infty$ and having no curves. Later, it was proven by Bogomolov \cite{Bog} and Teleman \cite{Tel} that they are the only surfaces  with $kod=-\infty$, first Betti number equal to $1$, second Betti number equal to $0$ and no curves.\par

\noindent
The problem of studying automorphism groups of compact complex non-K\"ahler surfaces starts with Namba's results on Hopf surfaces \cite{Nam} (1974). In \cite{FN} (2005), Fujimoto and Nakayama classify compact complex surfaces admitting nontrivial surjective endomorphisms. More recently, Prokhorov and Shramov prove in \cite{PrSh} (2020) that automorphism groups of Inoue and primary Kodaira surfaces have the Jordan property.\par

\noindent
In this paper we give a more explicit description of the automorphism group of Inoue surfaces of type $S^{(+)}/S^{(-)}$. To achieve this, we first present in Section \ref{const}, a new way of constructing the Inoue surfaces. Our approach is closely related to that of Teleman \cite{KT}, but we adopt a slightly different formulation which we find more suitable for the study of automorphisms. This is because our formulation allows us  to decide when an automorphism of the universal cover descends to an automorphism of the surface, by expressing the condition in terms of the data arising from the associated number field (see proposition \ref{bound} and proposition \ref{equ}).\par

\noindent
For a surface of type $S^{(+)}$ we first prove in Section \ref{concom} that the automorphism group is an extension of a discrete group $Q$ by $\C^*$. This result follows relatively easily from the definition of the surface, the more challenging aspect being to understand the group $Q$. We next define in Section \ref{gpq} a finite metabelian group $\mathcal{H}_{\theta, [I]}$ \ref{NTgroup} (which depends on a quadratic real number field $\Q[X]/(X^2-\theta X +1)$ and a fractional ideal $I$ of it) and show that $Q$ identifies canonically with a subgroup of it; in particular, it follows that $Q$ is finite and metabelian. We also give precise conditions for a given element of $\mathcal{H}_{\theta, [I]}$ to belong to $Q$. We use these conditions to write a short algorithm for computing $Q$. Next, in Section \ref{expls} we present some worked out examples of explicit computation of the group $Q$. Eventually, the last section \ref{s-} is devoted to the study of surfaces $S^{(-)}$. For a surface of type $S^{(-)}$, we prove that the automorphisms group is an extension of a finite metabelian group $Q$ by $\Z/2\Z$. 

\hfill

\section{Basic facts}
Inoue introduced three families of compact complex surfaces, $S^0$, $S^{(+)}$ and $S^{(-)}$. They are examples of surfaces of Kodaira class $VII$ (meaning that $b_1=1$ and $kod = -\infty$) and have the property that they do not contain curves. In this paper we will focus on the latter two types. We begin by recalling the original construction of Inoue surfaces of type $S^{(+)}$/$S^{(-)}$, as they were introduced in \cite{Ino}.\par

\noindent
{\bf Surfaces of type $\mathbf{S^{(+)}}$:} Let $N = (n_{ij}) \in \text{SL}(2,\Z)$ be a matrix with real eigenvalues $\alpha$ and $\frac{1}{\alpha}$, $\alpha > 1$. Let $(a_1,a_2)$ and $(b_1,b_2)$ be eigenvectors of $N$ associated to $\alpha$ and $\frac{1}{\alpha}$, respectively. Let $p,q,r$ be integers, $r>0$, and let $t$ be a complex number. Let $G^{(+)}_{N,p,q,r;t}$ be the group of automorphisms of $\HH\times \C$ generated by
\begin{equation}\label{Gp}
\begin{split}
&g_0(w,z) = (\alpha w,z+t)\\
&g_i(w,z) = (w + a_i, z+ b_iw+c_i)\ \ \text{ for }i\in\{1,2\}\\
&g_3(w,z) = (w, z+ d)
\end{split}
\end{equation}
where $d=\frac{b_1a_2-b_2a_1}{r}$ and $(c_1,c_2) \in \R^2$ is the unique solution of the system
\begin{equation}\label{InCon}
(N-I_2)\begin{pmatrix}
c_1\\
c_2
\end{pmatrix}=
-\begin{pmatrix}
e_1\\
e_2
\end{pmatrix}-d
\begin{pmatrix}
p\\
q
\end{pmatrix}
\end{equation}
Here, $e_1,e_2\in \R$ are defined by
$$
e_i = \frac{1}{2}n_{i1}(n_{i1}-1)a_1b_1 + \frac{1}{2}n_{i2}(n_{i2}-1)a_2b_2 + n_{i1}n_{i2}b_1a_2
$$
One may show that the action of $G_{N,p,q,r;t}^{(+)}$ on $\HH \times \C$ is properly discontinuous and without fixed points.

\begin{definition}
The compact complex surface obtained as the quotient of $\HH \times \C$ by the action of $G^{(+)}_{N,p,q,r;t}$ is denoted by $S^{(-)}_{N,p,q,r}$ and it is called an Inoue surface of type $S^{(+)}$.
\end{definition}

\noindent
{\bf Surfaces of type $\mathbf{S^{(-)}}$:} Let $N = (n_{ij})\in \text{GL}(2,\Z)$, $\det(N)=-1$, with real eigenvalues $\alpha$ and $-\frac{1}{\alpha}$, $\alpha > 1$. Let $(a_1,a_2)$ and $(b_1,b_2)$ be eigenvectors of $N$ associated to $\alpha$ and $-\frac{1}{\alpha}$, respectively. Let $p,q,r$ be integers, $r>0$. Let $G_{N,p,q,r}^{(-)}$ be the group of automorphisms of $\HH \times \C$ generated by 
\begin{equation}\label{Gm}
\begin{split}
&g_0(w,z) = (\alpha w,-z)\\
&g_i(w,z) = (w + a_i, z+ b_iw+c_i)\ \ \text{ for }i\in\{1,2\}\\
&g_3(w,z) = (w, z+ d)
\end{split}
\end{equation}
where $d=\frac{b_1a_2-b_2a_1}{r}$ and $(c_1,c_2) \in \R^2$ is the unique solution of the system
\begin{equation}\label{InCon1}
(N+I_2)\begin{pmatrix}
c_1\\
c_2
\end{pmatrix}=
-\begin{pmatrix}
e_1\\
e_2
\end{pmatrix}-d
\begin{pmatrix}
p\\
q
\end{pmatrix}
\end{equation}
Here, $e_1,e_2\in \R$ are defined by
$$
e_i = \frac{1}{2}n_{i1}(n_{i1}-1)a_1b_1 + \frac{1}{2}n_{i2}(n_{i2}-1)a_2b_2 + n_{i1}n_{i2}b_1a_2
$$
Once again, the action of $G^{(-)}_{N,p,q,r}$ on $\HH \times \C$ is properly discontinuous and without fixed points.

\begin{definition}
The compact complex surface obtained as the quotient of $\HH \times \C$ by the action of $G^{(-)}_{N,p,q,r}$ is denoted by $S^{(-)}_{N,p,q,r}$ and it is called an Inoue surface of type $S^{(-)}$.\par
\end{definition}

\noindent
As explained in the introduction of \cite{KT}, the original notation for Inoue surfaces of type $S^{(+)}$ may suggest that the biholomorphism class of the surface is independent of the choice of eigenvectors $(a_1,a_2)$ and $(b_1,b_2)$. However, this is not the case. One can show that, up to biholomorphism, varying the choice for the eigenvectors $(a_1,a_2)$ and $(b_1,b_2)$ (while keeping the matrix $N$ fixed) is equivalent to making a corresponding change in the parameters $p,q$ and $t$. The analogous statement for surfaces of type $S^{(-)}$ is also true.

\begin{theorem}[Inoue \cite{Ino}] \label{dim}
i) For $X$ Inoue surface of type $S^{(+)}$, $h^0(X, \mathcal{T}_X)=1$.\\
ii) For $X$ Inoue surface of type $S^{(-)}$, $h^0(X, \mathcal{T}_X)=0$.
\end{theorem}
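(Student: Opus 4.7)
I would handle both parts simultaneously by analyzing $G$-invariant holomorphic vector fields on the universal cover $\HH\times \C$, where $G$ stands for $G^{(+)}_{N,p,q,r;t}$ or $G^{(-)}_{N,p,q,r}$. Any $V\in H^0(X,\mathcal{T}_X)$ lifts to a $G$-invariant holomorphic field $\tilde V = f(w,z)\partial_w+g(w,z)\partial_z$ on $\HH\times\C$. Computing $(g_k)_*$ for each generator translates $G$-invariance into functional equations: from $g_3$, $f$ and $g$ are $d$-periodic in $z$; from $g_i$ $(i=1,2)$, $f(w+a_i,z+b_iw+c_i)=f(w,z)$ and $g(w+a_i,z+b_iw+c_i)=g(w,z)+b_if(w,z)$; from $g_0$, in the $(+)$ case $f(\alpha w,z+t)=\alpha f(w,z)$ and $g(\alpha w,z+t)=g(w,z)$, while in the $(-)$ case $f(\alpha w,-z)=\alpha f(w,z)$ and $g(\alpha w,-z)=-g(w,z)$.

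\emph{Fourier reduction.} Expanding $f(w,z)=\sum_n f_n(w)e^{2\pi i nz/d}$, the $g_i$-conditions become $f_n(w+a_i)=f_n(w)e^{-2\pi i n(b_iw+c_i)/d}$. Since $(a_1,a_2)$ is an eigenvector of $N\in\mathrm{SL}(2,\Z)$ for the irrational eigenvalue $\alpha$, the ratio $a_2/a_1$ is irrational, hence $\Z a_1+\Z a_2$ is dense in $\R$; for $n=0$ this density plus holomorphy force $f_0$ to be constant, and the $g_0$-relation then gives $\alpha f_0=f_0$, so $f_0\equiv 0$.

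\emph{Core step: $f_n\equiv 0$ for $n\ne 0$.} Iterating the quasi-periodicity yields
\[
|f_n(w_0+k_1a_1+k_2a_2)| = |f_n(w_0)|\,e^{2\pi nv(k_1b_1+k_2b_2)/d}, \qquad v=\operatorname{Im}(w_0).
\]
Assume $f_n(w_0)\ne 0$. By Dirichlet approximation, pick an integer sequence $(k_1^{(m)},k_2^{(m)})$ with $k_1a_1+k_2a_2\to 0$ and $|k_2|\to\infty$. Because $(a_1,a_2)$ and $(b_1,b_2)$ are eigenvectors of $N$ for distinct eigenvalues they are $\R$-linearly independent, so $a_1b_2-a_2b_1\ne 0$; substituting $k_1\approx -k_2 a_2/a_1$ gives
\[
k_1b_1+k_2b_2 \sim \frac{k_2(a_1b_2-a_2b_1)}{a_1} = -\frac{k_2\, rd}{a_1},
\]
which is unbounded with prescribable sign. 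Choosing the sign so the exponent diverges to $+\infty$ makes the right-hand side blow up, while the left-hand side tends to $|f_n(w_0)|$ by continuity, a contradiction. Hence $f\equiv 0$.

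\emph{Conclusion.} With $f\equiv 0$, the function $g$ satisfies the identical equations just solved for $f$, so repeating the Fourier analysis yields $g\equiv C$ for some $C\in\C$. In the $(+)$ case the remaining $g_0$-equation $g(\alpha w,z+t)=g(w,z)$ is automatic, so $H^0(X,\mathcal{T}_X)$ is the one-dimensional space spanned by $\partial_z$, proving (i); in the $(-)$ case $g(\alpha w,-z)=-g(w,z)$ forces $C=-C$, hence $C=0$, proving (ii). The main obstacle is the core step: because $\Z a_1+\Z a_2$ is not a lattice in $\C$ but a dense subgroup of $\R$, standard theta-function arguments do not apply directly, and one must combine Diophantine approximation with the $\R$-linear independence of the two eigenvectors to force the blow-up.
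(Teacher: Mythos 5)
The paper does not prove this statement; it is quoted as Theorem \ref{dim} directly from Inoue \cite{Ino}, so there is no internal proof to compare against. Your argument is a correct, self-contained reconstruction of the standard computation (essentially Inoue's original one): the invariance equations, the Fourier expansion in $z$ with real period $d$, and the Diophantine blow-up argument using $a_1b_2-a_2b_1=-rd\neq 0$ all check out, including the reduction of $g$ to the same system once $f\equiv 0$ and the sign analysis that distinguishes the $S^{(+)}$ and $S^{(-)}$ cases.
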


\noindent
Before proceeding, we recall the following well-known result.
\begin{theorem} [Bochner, Montgomery \cite{BM}] \label{bm}
Let $X$ be a compact complex manifold, then the group of biholomorphisms $\Aut(X)$ has a natural complex Lie group structure. Moreover, the associated Lie algebra is $H^0(X, \mathcal{T}_X)$ (where $\mathcal{T}_X$ is the holomorphic tangent sheaf of $X$).
\end{theorem}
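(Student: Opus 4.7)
The plan is to equip $\Aut(X)$ with the compact-open topology and verify successively that (i) it is a topological group acting continuously on $X$, (ii) it is locally compact, (iii) it acquires a Lie group structure via a classical general theorem, and (iv) its Lie algebra matches $H^0(X, \mathcal{T}_X)$ with a compatible complex structure. Step (i) is immediate from compactness of $X$, which makes composition and inversion continuous for this topology.

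For step (ii) I would run a normal-families argument. Cover $X$ by finitely many coordinate polydisks and fix a Hermitian metric. Any sequence $(f_n)$ in $\Aut(X)$ is equicontinuous and has uniformly bounded $C^\infty$-norm on compact subsets of the polydisks, by Cauchy estimates applied in local charts. Arzel\`a-Ascoli then extracts a subsequence converging in $C^\infty$ to a holomorphic self-map of $X$. Restricting attention to a small neighborhood of $\mathrm{id}$ in the compact-open topology, the differentials stay close to the identity everywhere, so such limits remain biholomorphisms; this produces a compact neighborhood of the identity in $\Aut(X)$. With local compactness in hand, step (iii) invokes the Bochner-Montgomery theorem (the very reference \cite{BM}): a locally compact topological group acting faithfully by $C^\infty$-diffeomorphisms on a smooth manifold is automatically a Lie group acting smoothly.

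For step (iv), any smooth one-parameter subgroup $t\mapsto \phi_t$ of $\Aut(X)$ differentiates at $t=0$ to a vector field $V = \partial_t|_{t=0}\phi_t$ on $X$; holomorphicity of each $\phi_t$ forces $V \in H^0(X, \mathcal{T}_X)$. Conversely, any holomorphic vector field on the compact manifold $X$ is complete, and integrates to a one-parameter subgroup of biholomorphisms. This bijection identifies the Lie algebra with $H^0(X, \mathcal{T}_X)$; the complex vector space structure on the latter (together with the fact that the Lie bracket of holomorphic vector fields is holomorphic) transports via left translation to give $\Aut(X)$ the structure of a complex Lie group.

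The main obstacle is step (ii): one must combine the Cauchy/normal-families estimates in several complex variables with a careful argument that $C^\infty$-limits of biholomorphisms near the identity remain biholomorphisms, i.e.\ do not suffer a drop in rank or a loss of injectivity. Once local compactness is secured, the passage between flows and holomorphic vector fields, and the promotion from real to complex Lie group, are standard.
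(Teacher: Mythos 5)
The paper does not prove this statement at all: it is quoted as a classical result with a citation to Bochner--Montgomery, and nothing in the rest of the paper depends on the internals of its proof. So there is no ``paper proof'' to match your argument against; what you have written is a blind reconstruction of the standard argument, and as a sketch it follows essentially the classical route. Your steps (i)--(iii) (compact-open topology, normal families via Montel/Cauchy estimates to get local compactness, then the locally-compact-transformation-group theorem to get a Lie group structure) and step (iv) (one-parameter subgroups versus complete holomorphic vector fields) are exactly the standard ingredients.

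Two places in your outline carry more weight than you give them. First, in step (ii) the assertion that ``the differentials stay close to the identity everywhere, so such limits remain biholomorphisms'' is not by itself a proof of bijectivity: closeness of differentials to the identity only gives a local biholomorphism, and you must then argue that a proper local biholomorphism of a compact connected manifold that is $C^0$-close to the identity is a degree-one covering, hence globally invertible. You flag this yourself as the main obstacle, which is the right instinct, but the degree/covering argument is the missing sentence. Second, the final promotion from real to complex Lie group is where the actual content of the cited 1947 paper lies: it is not enough that $H^0(X,\mathcal{T}_X)$ is a complex vector space with complex-bilinear bracket; one must check that the adjoint representation is complex-linear (so that the left-invariant almost complex structure is well defined and integrable) and that the evaluation map $\Aut(X)\times X\to X$ is holomorphic in both variables jointly. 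Neither point is a fatal gap --- both are standard --- but a complete write-up would have to supply them, whereas the paper sidesteps the entire issue by citation.
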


\noindent
From theorem \ref{dim} and theorem \ref{bm} above, we can see that the automorphism group of an Inoue surface is of dimension $1$ (as a complex Lie group) when the surface is of type $S^{(+)}$ and it is discrete when the surface is of type $S^{(-)}$.\par

\noindent
We will denote by $\Aff(\HH \times \C)$ the group affine maps $f: \C^2 \to \C^2$ for which $f(\HH \times \C) = \HH \times \C$. More precisely
$$\Aff(\HH \times \C)= \left\{ 
\begin{pmatrix}
w \\
z 
\end{pmatrix} \to
\begin{pmatrix}
\mu & 0 \\
\lambda & \nu
\end{pmatrix}
\begin{pmatrix}
w \\
z 
\end{pmatrix} +
\begin{pmatrix}
l \\
s 
\end{pmatrix} \vert
\mu \in \R_>;\  \lambda, s \in \C;\  \nu \in \C^*;\  l \in \R 
 \right\}$$

\begin{remark} \label{Emb}
All Inoue surfaces arise as quotients of $\mathbb{H} \times \mathbb{C}$ under the action of a subgroup of $\Aff(\mathbb{H} \times \mathbb{C})$ \cite{Ino}. For Inoue surfaces of type $S^{(+)}$ or $S^{(-)}$, this subgroup can be assumed to lie in the (real) Lie subgroup $\mathcal{G} \leq \Aff(\mathbb{H} \times \mathbb{C})$ defined as:
\[
\mathcal{G} \coloneqq 
\begin{pmatrix}
\{\pm 1\} & \mathbb{R} & \mathbb{C} \\
          & \mathbb{R}_{>} & \mathbb{R} \\
          &                 & 1
\end{pmatrix}
\]
where a matrix element of $\mathcal{G}$ acts on $(w,z)\in \HH \times \C$ by left multiplication with the column vector  $\begin{pmatrix} z & w & 1 \end{pmatrix}^\mathsf{T}$ (compare \cite{Wall}, pp. 146--147).\par
\end{remark}

\noindent
Next, we will give a different presentation of the group $\mathcal{G}$, see expression \ref{Op} below. This is precisely the presentation that is used in \cite{OeMi}, but it is written in such a way as to allow the case where $t\neq 0$. The group $\mathcal{G}$ is the semidirect product:
\[
\mathcal{G} =   \mathbb{R}^* \ltimes \mathcal{H}
\]
where $\mathcal{H}$ is the mixed Heisenberg group
  \[
  \mathcal{H} = 
  \begin{pmatrix}
  1 & \mathbb{R} & \mathbb{C} \\
    & 1          & \mathbb{R} \\
    &            & 1
  \end{pmatrix}
  \]
and $\mathbb{R}^*$ is embedded as:
  \[
  \alpha \to 
  \begin{pmatrix}
  \frac{\alpha}{|\alpha|} & 0 & 0 \\
                          & |\alpha| & 0 \\
                          &          & 1
  \end{pmatrix}
  \]
The Lie algebra of $\mathcal{H}$ is $\mathfrak{h} = \mathbb{R} \times \mathbb{R} \times \mathbb{C}$ with the bracket
$$
[(a_1, b_1, c_1), (a_2, b_2, c_2)] = (0, 0, b_1 a_2 - a_1 b_2)
$$
and exponential map
$$
\exp(a, b, c) = 
\begin{pmatrix}
1 & b & c + \frac{ab}{2} \\
  & 1 & a \\
  &   & 1
\end{pmatrix}
$$
In this case the exponential map is a diffeomorphism, which allows us to identify $\mathcal{H}$ with its Lie algebra, $\mathfrak{h}$. By this we mean that the elements of $\mathcal{H}$ can be expressed in logarithmic coordinates via the inverse of the exponential map. With respect to these coordinates, the group law is given by the Baker-Campbell-Hausdorff formula:
$$
(a_1, b_1, c_1) \cdot (a_2, b_2, c_2) = \left(a_1 + a_2,\ b_1 + b_2,\ c_1 + c_2 - \frac{a_1 b_2 - b_1 a_2}{2}\right)
$$
Thus, we see that $\mathcal{G} \simeq \mathbb{R}^* \ltimes \mathfrak{h}$ and the group operation may be explicitly written as:
\begin{equation} \label{Op}
\begin{split}
&[\alpha_1, (a_1, b_1, c_1)] \cdot [\alpha_2, (a_2, b_2, c_2)]= \\
& = \left[\alpha_1 \alpha_2,\ \left(a_1 + |\alpha_1| a_2,\ b_1 + \frac{\alpha_1}{|\alpha_1|^2} b_2,\ c_1 + \frac{\alpha_1}{|\alpha_1|} c_2 - \frac{a_1 b_2 \frac{\alpha_1}{|\alpha_1|^2} - b_1 a_2 |\alpha_1|}{2} \right)\right]
\end{split}
\end{equation}
This is our preferred way of presenting the group $\mathcal{G}$ and it will be used throughout the following sections. The next result will prove very useful, since it reduces our search for the automorphism groups to algebraic computations in the group of affinities.\par

\begin{theorem}[Khaled, Teleman \cite{KT}] \label{af}
Let $S$ be an Inoue surface, $f \in \Aut(S)$ and $\tilde{f}:\HH\times\C \to \HH\times\C$ a lift of $f$. Then $\tilde{f} \in \Aff(\HH \times \C)$.
\end{theorem}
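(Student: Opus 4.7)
The plan is to exploit the canonical vertical foliation $\mathcal{F}$ on $\HH\times\C$ with leaves $\{w_0\}\times\C$. First I would check that $\mathcal{F}$ descends to a foliation on the Inoue surface $S$: in both types, the generators $g_i$ for $i=1,2,3$ act as translations in $z$, and $g_0$ either preserves $\partial/\partial z$ (type $S^{(+)}$) or sends it to $-\partial/\partial z$ (type $S^{(-)}$), so in either case the tangent line $\C\,\partial/\partial z$ is $G$-invariant. I would then argue that every $f\in\Aut(S)$ preserves this foliation; once this is established, any lift splits as $\tilde f(w,z)=(F(w),G(w,z))$ for some holomorphic $F:\HH\to\HH$ and $G:\HH\times\C\to\C$.

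For type $S^{(+)}$, the invariance of $\mathcal{F}$ is an immediate consequence of theorem \ref{dim}: the vector field induced by $\partial/\partial z$ spans the one-dimensional space $H^0(S,\mathcal{T}_S)$, so $f_*$ scales it by some $c\in\C^*$, and lifting gives $\tilde f_*(\partial/\partial z)=c\,\partial/\partial z$; integrating yields $G(w,z)=cz+H(w)$. For type $S^{(-)}$ there is no global vector field, and I would handle it by passing to the double cover $\HH\times\C/G'$, where $G':=\langle g_0^2,g_1,g_2,g_3\rangle$ is characteristic of index $2$ in $G^{(-)}$; this cover is of type $S^{(+)}$ associated with $N^2$, and since $\tilde f$ normalizes $G^{(-)}$ it also normalizes $G'$, so $f$ lifts to an automorphism of the double cover and the previous case applies.

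Having reduced to $\tilde f(w,z)=(F(w),cz+H(w))$, I would show $F$ is affine. Since $\tilde f$ normalizes $G$, the biholomorphism $F$ normalizes the image $\Gamma\leq\Aut(\HH)$ generated by $w\mapsto\alpha w$ and the translations $w\mapsto w+a_i$. The translation subgroup is characteristic in $\Gamma$ (the kernel of the scaling homomorphism $\Gamma\to\langle\alpha\rangle$), and each translation has $\infty\in\partial\HH$ as unique fixed point; therefore conjugation by $F$ forces $F(\infty)=\infty$, so $F$ is a M\"obius transformation fixing $\infty$, i.e., $F(w)=\mu w+l$ with $\mu>0$ and $l\in\R$.

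It remains to show $H$ is affine. Computing $\tilde f\,g_i\,\tilde f^{-1}$ for $i=1,2$ and matching the result with the prescribed form of elements of $G$ (whose second coordinate is affine in $w$) yields that $H(w+a_i)-H(w)$ is affine in $w$. Differentiating twice, $H''(w+a_i)=H''(w)$ for $i=1,2$; since $a_1/a_2$ is a quadratic irrational, $\Z a_1+\Z a_2$ is dense in $\R$, so $H''$ is invariant under all real translations. Together with the Cauchy--Riemann equations this forces $H''$ to be constant, so $H$ is a polynomial of degree at most $2$. Finally, comparing $\tilde f\,g_0\,\tilde f^{-1}$ with its target in $G$ (where again the second coordinate must be affine in $w$) shows that the $w^2$-coefficient of $H(\alpha w)-H(w)$ vanishes; as $\alpha\neq\pm 1$, the quadratic term of $H$ must be zero, so $H$ is affine. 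The main obstacle I anticipate is the $S^{(-)}$ reduction: one must verify carefully that the double cover $\HH\times\C/G'$ is indeed of type $S^{(+)}$ with appropriately adjusted parameters, and that $G'$ is truly characteristic so that the lift descends.
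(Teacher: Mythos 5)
Your argument is essentially correct, but note that the paper offers no proof of Theorem \ref{af} at all: it is imported verbatim from Khaled--Teleman \cite{KT}, so there is no internal proof to compare against. Your proposal therefore supplies an independent, self-contained argument, and each of its steps checks out: the vertical foliation descends because all generators have first coordinate depending only on $w$; for $S^{(+)}$ the invariance of $\C\,\partial/\partial z$ under $f_*$ follows from $h^0(S,\mathcal{T}_S)=1$ (Theorem \ref{dim}); the reduction of $S^{(-)}$ to $S^{(+)}$ via $G'=\langle g_0^2,g_1,g_2,g_3\rangle$ is sound, since $G'$ is the preimage of $2\Z$ under the canonical surjection of $G$ onto the free part of its abelianization (hence characteristic, so $\tilde f$ normalizes it), and the fact that $\HH\times\C/G'$ is of type $S^{(+)}$ with matrix $N^2$ and $t=0$ follows directly from the group relations (in the $S^{(-)}$ case $g_0g_3g_0^{-1}=g_3^{-1}$, and $g_0^2g_ig_0^{-2}=g_1^{(N^2)_{i1}}g_2^{(N^2)_{i2}}g_3^{p_i'}$), so the concern you flag at the end is easily discharged. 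The parabolic-fixed-point argument forcing $F(\infty)=\infty$ is correct (the only parabolic elements of the image of $G$ in $\Aut(\HH)$ are the nontrivial translations, since $w\mapsto\alpha^nw+a$ with $n\neq 0$ is hyperbolic), and the periodicity argument for $H''$ works because $a_1/a_2$ is indeed irrational (otherwise $N$ would have a rational eigenvalue) so $\Z a_1+\Z a_2$ is dense and $H'''\equiv 0$ by continuity. Two small points of presentation: you do not actually need $T$ to be characteristic in $\Gamma$, only that conjugation preserves the parabolic type; and in the final step the relevant identity for $S^{(-)}$ (after passing to the cover) is $H(\alpha^2w)-H(w)$ affine, which kills the quadratic term since $\alpha^4\neq 1$. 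This is close in spirit to the classical Inoue-style computation that \cite{KT} formalizes, and it is a perfectly acceptable substitute for the citation.
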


\section{Surfaces of type $S^{(+)}$}
\subsection{A slightly different construction} \label{const}
In this section we will introduce an alternative way of presenting Inoue surfaces $S^{(+)}$. In definition \ref{gr}, we introduce a group of analytic automorphisms of $\HH \times \C$, depending on a set of parameters arising from a real quadratic number field. Then, in definition \ref{quo}, we introduce the associated surface of such a group. In proposition \ref{p1} we show that the resulting surfaces are indeed of Inoue type $S^{(+)}$ and in proposition \ref{p2}, we prove that our construction recovers all the possible examples of surfaces of this type. We begin by setting up some notations and making some remarks on the properties of the data that will be used for the construction.\par

Let $\theta$ be an integer, $\theta \geq 3$, consider the polynomial 
\begin{equation} \label{poly}
f_\theta(X)=X^2-\theta X+1 
\end{equation}
and the associated real quadratic number field $K=\Q[X]/(f_{\theta})$.
Let $u_\theta \coloneqq \hat{X} \in K$, it is clear that $u_\theta$ is an algebraic integer and, in fact, a unit in $\OO_K$. Let $\sigma_1, \sigma_2 : K \to \R$ be the two embeddings of $K$. We now fix the convention that $\sigma_1(u_\theta)>1$. The set 
\begin{equation}\label{halfpos}
\OO_K^{\times, >} \coloneqq \{u \in \OO_K^\times \vert \sigma_1(u) > 0\}
\end{equation}
is a subgroup of the multiplicative group of units $\OO_K^\times$.
Since $K$ is real quadratic, from Dirichlet's unit theorem, we get that $\OO_K^\times \simeq \{\pm 1\} \times \Z$. Let $\eta$ be the fundamental unit of $\OO_K$ ($\sigma_1(\eta)>1$), we then have that $\OO_K^{\times, >} \simeq \langle \eta \rangle$ and $\langle u_\theta \rangle \leq \OO_K^{\times, >}$ is a subgroup of finite index. More precisely 
\begin{equation}\label{index}
[\OO_K^{\times,>}: \langle u_\theta\rangle] = \frac{\text{log}(\sigma_1(u_\theta))}{\text{log}(\sigma_1(\eta))}
\end{equation}
Let $I$  be a fractional ideal of the order $\Z[u_\theta]$; then, as a group, $I$ is isomorphic to $\Z^2$. Hence we may choose some $\Z$-basis $(x_1,x_2)$ for $I$. The multiplicative action of $u_\theta$ on the ideal $I$, with respect to the basis $(x_1,x_2)$, is expressed by a matrix $N_{\theta,x_1,x_2}\in \text{SL}(2,\Z)$ with trace $\theta$, such that
\begin{equation}\label{AsMat}
N_{\theta,x_1,x_2} \begin{pmatrix}x_1 \\x_2\end{pmatrix}=u_0\begin{pmatrix}x_1 \\x_2\end{pmatrix}
\end{equation}

\begin{definition}\label{msub}
Let $K$ be the real quadratic number field $K = \Q[X]/ (X^2-\theta X +1)$ for some integer $\theta \geq 3$. Let $u_\theta = \hat{X} \in K$ and let $I$ be a fractional ideal of $\Z[u_\theta]$. Denote by $ ^I[\OO_K^{\times,>}]$ the subgroup of $\OO_K^{\times,>}$ (see \ref{halfpos}) containing those units that leave the ideal $I$ invariant (as a set), that is:
$$^I[\OO_K^{\times,>}]\coloneqq \{u\in \OO_K^{\times,>}\vert uI=I\}$$
The condition that a unit $u$ is contained in $^I[\OO_K^{\times,>}]$ is equivalent to $I$ being a $\Z[u]$-fractional ideal.
\end{definition}
\noindent
We will use $\Norm (\cdot)$ and $\Trace(\cdot)$ for the norm and trace in $K$ respectively and we will denote by $\varphi : K \to K $  the nontrivial Galois automorphism of $K$. We will use $\chi$ to denote the $\Q$-bilinear form $\chi: K \times K \to \R$ given by 
\begin{equation}\label{chi}
\chi(x,y)= \sigma_1(x)\sigma_2(y)-\sigma_1(y)\sigma_2(x)
\end{equation}
\noindent
Although the following properties of the form $\chi$ are straightforward to verify, we record them here, as they will be intensively used in the upcoming computations:
\begin{itemize}
\item[$\bullet$] The form $\chi$ is antisymmetric.
\item[$\bullet$] $\chi(xy_1, y_2)= \chi(y_1, \varphi(x) y_2)$, for all $x,y_1,y_2 \in K$.
\item[$\bullet$] $\chi(xy_1, xy_2)= \Norm(x)\chi(y_1,y_2)$, for all $x,y_1,y_2 \in K.$
\end{itemize}
\begin{proposition}\label{emb1}
There is a natural map from the set $\OO_K^{\times, >} \times K \times \C$ to the group $\mathcal{G}$ (see \ref{Op}):
$$\iota_{\theta}^{(+)}: \OO_K^{\times, >} \times K \times \C \to \mathcal{G}\ \ \ \ \ \ \iota_{\theta}^{(+)}([u,x,t])=[\sigma_1(u)\Norm(u), (\sigma_1(x), \sigma_2(x),t)]$$
The map $\iota_{\theta}^{(+)}$ is injective and its image is a subgroup of $\mathcal{G}$.
\end{proposition}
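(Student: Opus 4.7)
The plan is to verify the three assertions—well-definedness, injectivity, and closure under the group operation—directly from the explicit presentation \ref{Op} of $\mathcal{G}$. For well-definedness, note that for $u\in\OO_K^{\times,>}$ one has $\sigma_1(u)>0$ and $\Norm(u)=\sigma_1(u)\sigma_2(u)\in\{\pm 1\}$, so $\sigma_1(u)\Norm(u)\in\R^*$, while the remaining coordinates automatically lie in $\R\times\R\times\C$. These identities also yield the two formulas
\[
|\sigma_1(u)\Norm(u)|=\sigma_1(u),\qquad \frac{\sigma_1(u)\Norm(u)}{|\sigma_1(u)\Norm(u)|^2}=\sigma_2(u),
\]
which will be the bridge between the parameters $(u,x,t)$ and the notation $[\alpha,(a,b,c)]$ used in \ref{Op}. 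Injectivity is then essentially linear algebra: the pair $(\sigma_1(x),\sigma_2(x))$ determines $x\in K$, the parameter $t$ is visible directly, and from $\sigma_1(u)\Norm(u)$ one recovers $\sigma_1(u)$ as its absolute value and $\Norm(u)$ as its sign, hence $u$.

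To prove that the image is a subgroup, I would show that $\iota_{\theta}^{(+)}$ is a group homomorphism for a suitable group structure on the domain. I propose the candidate law
\[
(u_1,x_1,t_1)*(u_2,x_2,t_2):=\bigl(u_1u_2,\;x_1+u_1x_2,\;t_1+\Norm(u_1)\,t_2-\tfrac{1}{2}\chi(x_1,u_1x_2)\bigr),
\]
substitute $\iota_{\theta}^{(+)}(u_i,x_i,t_i)$ into the operation of \ref{Op}, and match components. The first slot yields $\sigma_1(u_1u_2)\Norm(u_1u_2)$ by multiplicativity of $\sigma_1$ and $\Norm$. The two real slots give
\[
\sigma_1(x_1)+\sigma_1(u_1)\sigma_1(x_2)=\sigma_1(x_1+u_1x_2),\qquad \sigma_2(x_1)+\sigma_2(u_1)\sigma_2(x_2)=\sigma_2(x_1+u_1x_2),
\]
using the two formulas above. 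For the complex slot, the Baker-Campbell-Hausdorff correction of \ref{Op} becomes, after substitution, $-\tfrac{1}{2}\bigl(\sigma_1(x_1)\sigma_2(u_1x_2)-\sigma_2(x_1)\sigma_1(u_1x_2)\bigr)$, which is exactly $-\tfrac{1}{2}\chi(x_1,u_1x_2)$ by the definition \ref{chi}. Hence $\iota_{\theta}^{(+)}$ intertwines $*$ with the product in $\mathcal{G}$, and combined with injectivity this makes $*$ a group law and the image a subgroup.

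The main obstacle is the fourth-coordinate check: it is there that the specific shape of the first slot $\sigma_1(u)\Norm(u)$—rather than simply $\sigma_1(u)$—earns its keep, because it is precisely this choice that forces the prefactor of $c_2$ to be $\Norm(u_1)$ and the prefactor of $b_2$ in the correction to be $\sigma_2(u_1)$, producing the clean form $\sigma_2(u_1x_2)$ inside $\chi$. Once this matching is verified, associativity, the identity $(1,0,0)$, and the existence of inverses are automatic by transport from $\mathcal{G}$ through the injection $\iota_{\theta}^{(+)}$.
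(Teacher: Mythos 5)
Your proposal is correct and follows essentially the same route as the paper: the paper's proof consists precisely of the product identity $\iota_{\theta}^{(+)}([u,x,t])\,\iota_{\theta}^{(+)}([v,y,s])=\iota_{\theta}^{(+)}\bigl(\bigl[uv,\ x+uy,\ t+\Norm(u)s-\tfrac{1}{2}\chi(x,uy)\bigr]\bigr)$ you derive, together with the identity and the explicit inverse $\bigl[\tfrac{1}{u},-\tfrac{x}{u},-\tfrac{t}{\Norm(u)}\bigr]$, plus injectivity from the definition. Your version just spells out the key reductions $|\sigma_1(u)\Norm(u)|=\sigma_1(u)$ and $\sigma_1(u)\Norm(u)/|\sigma_1(u)\Norm(u)|^2=\sigma_2(u)$ that the paper leaves implicit, and your ``automatic by transport'' remark for inverses is fine provided you note, as the paper does implicitly, that the candidate inverse again lies in $\OO_K^{\times,>}\times K\times\C$.
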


\begin{proof}
To prove that the image is a subgroup it is enough to observe that:
\begin{casenv}

\item $\iota_{\theta}^{(+)}([u,x,t])\iota_{\theta}^{(+)}([v,y,s])=\iota_{\theta}^{(+)}\left(\left[uv,\ x+uy,\ t+\Norm(u)s - \frac{\chi(x,uy)}{2}\right]\right)$

\item $\iota_{\theta}^{(+)}([1,0,0])=[1,(0,0,0)]$,  which is the identity of $\mathcal{G}$.

\item $\iota_{\theta}^{(+)}([u,x,t])\iota_{\theta}^{(+)}\left(\left[\frac{1}{u}, -\frac{x}{u}, -\frac{t}{\Norm(u)}\right]\right)=[1,(0,0,0)]$
\end{casenv}
The injectivity follows directly from the definition.
\end{proof}

\begin{definition}\label{Incl}
Let $\mathcal{G}_{\theta}^{(+)}$ be the group $\OO_K^{\times, >} \times K \times \C$, with the group structure induced by the above embedding $\iota_{\theta}^{(+)}$ in $\mathcal{G}$.
\end{definition}

\noindent
As seen in the proof of proposition \ref{emb1}, the group structure on $\mathcal{G}_\theta^{(+)}$ is given by 
\begin{equation*}[u,x,t][v,y,s]=\left[uv,\ x+uy,\ t+\Norm(u)s - \frac{\chi(x,uy)}{2}\right]
\end{equation*}
This implies that $\mathcal{G}_\theta^{(+)}$ is a group extension of the form
$$\begin{tikzcd}[ampersand replacement=\&,cramped]
	1 \& \C \& {\mathcal{G}_{\theta}^{(+)}} \& {\OO_K^{\times,>}\ltimes K} \& 1
	\arrow[from=1-1, to=1-2]
	\arrow[from=1-2, to=1-3]
	\arrow[from=1-3, to=1-4]
	\arrow[from=1-4, to=1-5]
\end{tikzcd}$$
where $\OO_K^{\times,>}$ acts on $K$ by multiplication. Moreover, the subgroup of $\mathcal{G}_{\theta}^{(+)}$ coming from the inclusion of the left term is the center of $\mathcal{G}_{\theta}^{(+)}$. Since the group $\mathcal{G}$ embeds in $\Aff(\HH \times \C) $ (remark \ref{Emb}), so does $\mathcal{G}_\theta^{(+)}$. Explicitly writing the left action of $\mathcal{G}_\theta^{(+)}$ on the complex domain $\HH\times \C$, we get
$$[u,x,t](w,z)=\left(\sigma_1(u)w + \sigma_1(x),\ \Norm(u)z + \sigma_2(x)\sigma_1(u)w + \frac{\Norm(x)}{2} + t\right)$$

\begin{definition}\label{gr}
Let $\theta$ be an integer, $\theta \geq 3$, take $f_\theta$ to be the polynomial defined by \ref{poly}. Let $K=\Q[X]/(f_\theta)$ and $u_\theta \coloneq \hat{X} \in K$. Let $r$ be a positive integer and $I = {}_\Z\langle x_1,x_2\rangle$, a fractional ideal of the order $\Z[u_\theta]$. Let $e$ be an element in $K$ and $t$ a complex number. We will denote by $\Gamma_{\theta,r,x_1,x_2,e;t}^{(+)}$ the subgroup of $\mathcal{G}_\theta^{(+)}$ (\ref{Incl}) generated by the elements:
$$g_0=[u_\theta,t];\ \ g_i=[1,x_i,\chi(x_i, e)], i \in \{1,2\};\ \ g_3=\left[1,0,-\frac{\chi(x_1,x_2)}{r}\right]$$
We will say that the group $\Gamma_{\theta,r,x_1,x_2,e;t}^{(+)}$ is in standard form if its center is generated by $g_3$.\par
\end{definition}

\noindent
The following lemma gives an equivalent statement to the standard form condition. This formulation, in terms of the parameter $e$, mirrors the first compatibility condition of definition 2.13 in \cite{KT}. 

\begin{lemma} \label{sf}
 Let $\Gamma =\Gamma_{\theta,r,x_2,x_2,e;t}^{(+)}$ and $N = (n_{ij})\coloneq N_{\theta, x_1,x_2}$, the associated matrix (\ref{AsMat}). Then, the standard form condition for $\Gamma$ is equivalent to 
\begin{equation}\label{sfl}
\frac{1-u_\theta}{u_\theta}e + \frac{n_{21}n_{22}}{2}x_1-\frac{n_{11}n_{12}}{2}x_2 \in Ir^{-1}
\end{equation}
\end{lemma}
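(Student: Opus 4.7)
The plan is to identify the standard form condition with the statement that the center $Z_c$ of $\Gamma$ equals $\langle g_3\rangle$, and then to recast this in terms of the pairing $\chi$.

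First I would verify from the group law on $\mathcal{G}_\theta^{(+)}$ and from $\Norm(u_\theta)=1$ that $g_3$ is central in $\Gamma$, and that an element $[u,y,s]\in\Gamma$ commutes with all of $g_0,g_1,g_2$ only when $u=1$ and $y=0$. Hence $Z(\Gamma)=Z_c:=\Gamma\cap\{[1,0,s]:s\in\C\}$, which coincides with the kernel of the projection $\pi:\Gamma\to \pi(\Gamma):=\langle u_\theta\rangle\ltimes I$. As a subgroup of $\C$, $\langle g_3\rangle=\tfrac{\chi(x_1,x_2)}{r}\Z$, so the standard form condition is $Z_c=\tfrac{\chi(x_1,x_2)}{r}\Z$.

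Next I would use the standard semidirect-product presentation of $\pi(\Gamma)\cong \Z\ltimes\Z^2$, with generators $a,b,c$ corresponding to $\pi(g_0),\pi(g_1),\pi(g_2)$ and relators $[b,c]$, $aba^{-1}c^{-n_{12}}b^{-n_{11}}$, $aca^{-1}c^{-n_{22}}b^{-n_{21}}$. By the standard presentation/kernel argument, $Z_c=\ker\pi$ is the normal closure in $\Gamma$ of $g_3$ together with the lifts of these three relators; since $Z_c$ is central, normal closure coincides with the subgroup generated. A direct calculation gives $[g_1,g_2]=[1,0,-\chi(x_1,x_2)]=g_3^r$ (redundant), and, for $i\in\{1,2\}$,
$$g_0 g_i g_0^{-1} g_2^{-n_{i2}} g_1^{-n_{i1}} = [1,0,s_i],\quad s_i := \chi\bigl((1-u_\theta)x_i,\,e\bigr)+\tfrac{n_{i1}n_{i2}}{2}\chi(x_1,x_2).$$
Hence the standard form condition is equivalent to $s_1,s_2\in\tfrac{\chi(x_1,x_2)}{r}\Z$.

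Finally, setting $F:=\tfrac{1-u_\theta}{u_\theta}e+\tfrac{n_{21}n_{22}}{2}x_1-\tfrac{n_{11}n_{12}}{2}x_2$, I would use the identities $\chi(xy_1,y_2)=\chi(y_1,\varphi(x)y_2)$ and $\varphi(u_\theta)=u_\theta^{-1}$ to check that $\chi(x_i,F)=-s_i$ for $i=1,2$. Writing $F=F_1x_1+F_2x_2$ in the $\Q$-basis $(x_1,x_2)$, the antisymmetry and bilinearity of $\chi$ give $\chi(x_1,F)=F_2\chi(x_1,x_2)$ and $\chi(x_2,F)=-F_1\chi(x_1,x_2)$. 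Since $(x_1,x_2)$ is a $\Z$-basis of $I$, one has $F\in r^{-1}I \iff rF_1,rF_2\in\Z \iff \chi(x_i,F)\in\tfrac{\chi(x_1,x_2)}{r}\Z \iff s_i\in\tfrac{\chi(x_1,x_2)}{r}\Z$, which is the desired equivalence.

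The main obstacle will be the second step, namely justifying that the listed elements $g_3,[g_1,g_2]$ and $g_0g_ig_0^{-1}g_2^{-n_{i2}}g_1^{-n_{i1}}$ generate all of $Z_c$; once this is in place, the remaining computations are routine manipulations of the bilinear form $\chi$ and the Galois action on $K$.
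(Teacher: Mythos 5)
Your proposal is correct and follows essentially the same route as the paper: identify the standard form condition with the statement that the central elements $g_0g_ig_0^{-1}(g_1^{n_{i1}}g_2^{n_{i2}})^{-1}$ lie in $\langle g_3\rangle$, compute them explicitly, and translate the resulting congruences into the membership $F\in Ir^{-1}$ via the form $\chi$. The only difference is that you supply a justification (via the presentation of $\langle u_\theta\rangle\ltimes I$ and the normal-closure/kernel argument) for the claim that these elements together with $g_3$ generate the center, which the paper simply asserts; your computations of $s_i$ and the identity $\chi(x_i,F)=-s_i$ check out.
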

\begin{proof}
We begin by observing that the center of $\Gamma$ is generated by $g_3$ and $g_1^{n_{i1}}g_2^{n_{i2}}(g_0 g_ig_0^{-1})^{-1}$, $i\in \{1,2\}$. Hence the standard form condition is equivalent to $g_1^{n_{i1}}g_2^{n_{i2}}(g_0 g_ig_0^{-1})^{-1} \in \langle g_3 \rangle$, $i \in \{1,2\}$. In other words, $\Gamma$ is in standard form if and only if there exist integers $p$ and $q$, such that $g_0g_1g_0^{-1}= g_1^{n_{11}}g_2^{n_{12}}g_3^{p}$ and $g_0g_2g_0^{-1}= g_1^{n_{21}}g_2^{n_{22}}g_3^{q}$. By direct computation, we see that the unique choice of $e \in K$ for which the above relations are satisfied is 
\begin{equation}\label{ee}
e=\frac{u_\theta}{1-u_\theta}\left(\left(\frac{n_{11}n_{12}}{2}+\frac{p}{r}\right)x_2-\left(\frac{n_{21}n_{22}}{2}+\frac{q}{r}\right)x_1\right)
\end{equation} 
Thus, the proof is complete.
\end{proof}

\begin{proposition} \label{p1}
Let $\Gamma$ be a group of type $\Gamma_{\theta, r,x_1,x_2,e;t}^{(+)}$, as defined by \ref{gr}. Assume that $\Gamma$ is in standard form, then, when seen as a subgroup of $\Aff(\HH \times \C)$, $\Gamma$ is a group of Inoue type $G^{(+)}$ (\ref{Gp}).
\end{proposition}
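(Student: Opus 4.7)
The plan is to unwind the action of each generator of $\Gamma$ on $\HH\times\C$ via the embedding $\iota_\theta^{(+)}$ and check, term by term, that the result matches the data $(N,p,q,r;t)$ of some Inoue group of type $G^{(+)}$. Substituting the generators of definition \ref{gr} into the explicit action formula for $\mathcal{G}_\theta^{(+)}$ stated just before that definition, and using $\Norm(u_\theta)=1$, I obtain
\[
g_0(w,z) = (\alpha w,\, z+t), \qquad g_i(w,z) = (w+a_i,\, z+b_i w + c_i), \qquad g_3(w,z)=(w,\, z+d),
\]
with $\alpha := \sigma_1(u_\theta)>1$, $a_i := \sigma_1(x_i)$, $b_i := \sigma_2(x_i)$, $c_i := \Norm(x_i)/2 + \chi(x_i,e)$, and $d := -\chi(x_1,x_2)/r = (b_1 a_2 - b_2 a_1)/r$. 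Taking $N := N_{\theta,x_1,x_2}$, we have $N\in\text{SL}(2,\Z)$ (integer entries by (\ref{AsMat}), determinant $\Norm(u_\theta)=1$); applying $\sigma_1$ and $\sigma_2$ to (\ref{AsMat}) shows that $(a_1,a_2)$ and $(b_1,b_2)$ are eigenvectors of $N$ for the eigenvalues $\alpha$ and $1/\alpha$ respectively, so the eigenvalue/eigenvector data required by (\ref{Gp}) is in place.

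What remains is to exhibit integers $p,q$ such that $(c_1,c_2)$ satisfies the Inoue compatibility system (\ref{InCon}). The key idea is to encode (\ref{InCon}) as a group-theoretic relation inside $\Gamma$: expanding both sides as affine maps using the forms above, the identity
\[
g_0 g_i g_0^{-1} = g_1^{n_{i1}} g_2^{n_{i2}} g_3^{p_i}, \qquad i\in\{1,2\},
\]
holds in $\Aff(\HH\times\C)$ if and only if the $w$-components match (automatic from the eigenvector equations for $(a_j)$ and $(b_j)$) and the $z$-components match, which rearranges exactly into the $i$-th row of (\ref{InCon}) with $(p,q)=(p_1,p_2)$. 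On the other hand, in the proof of lemma \ref{sf} this same identity was computed inside $\mathcal{G}_\theta^{(+)}$ and shown to be equivalent to the standard form condition. Hence the standard form hypothesis on $\Gamma$ supplies the required $p_1, p_2$, and setting $p := p_1$, $q := p_2$ identifies $\Gamma$ with $G^{(+)}_{N,p,q,r;t}$.

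The main obstacle is the algebraic bookkeeping behind the equivalence between (\ref{InCon}) and the conjugation identity above. One has to expand $(N-I)(c_1,c_2)^{\mathsf{T}}$ using $c_i = a_i b_i/2 + a_i \sigma_2(e) - b_i \sigma_1(e)$, write out the Inoue quantities $e_i = \tfrac12 n_{i1}(n_{i1}-1) a_1 b_1 + \tfrac12 n_{i2}(n_{i2}-1) a_2 b_2 + n_{i1} n_{i2} b_1 a_2$, and simplify using $u_\theta x_i = n_{i1} x_1 + n_{i2} x_2$ from (\ref{AsMat}) together with the antisymmetry and Galois-adjoint properties of $\chi$ listed after definition \ref{msub}. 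Once this calculation is pushed through, the identification of $\Gamma$ with an Inoue group of type $G^{(+)}$ is immediate.
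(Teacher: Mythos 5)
Your proposal is correct and follows essentially the same route as the paper: unwind the generators' action on $\HH\times\C$ via $\iota_\theta^{(+)}$, read off the eigenvector data $(a_i),(b_i)$ for $N_{\theta,x_1,x_2}$ from (\ref{AsMat}), and reduce the existence of the integers $p,q$ in (\ref{InCon}) to the standard form hypothesis via Lemma \ref{sf}. The only cosmetic difference is that you pass through the conjugation identities $g_0 g_i g_0^{-1}=g_1^{n_{i1}}g_2^{n_{i2}}g_3^{p_i}$ directly (which is exactly what underlies the proof of Lemma \ref{sf}), whereas the paper first rewrites (\ref{InCon}) as an explicit condition in terms of $\chi$ before invoking that lemma.
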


\begin{proof}
Writing explicitly the action of the generators of $\Gamma$ on $\HH \times \C$, we get
\begin{align*}
&g_0(w,z)=(\sigma_1(u_\theta)w,\ z+t)\\
&g_i(w,z)=(w+\sigma_1(x_i),\ z+\sigma_2(x_i)w+\frac{\Norm(x_i)}{2} + \chi(x_i,e))\\
&g_3(w,z)=(w,z-\frac{\chi(x_1,x_2)}{r})
\end{align*}
Let $N=(n_{ij})$ be the associated matrix of $u_\theta$ (\ref{AsMat}). 
$$N \begin{pmatrix}x_1 \\x_2\end{pmatrix}=u_\theta\begin{pmatrix}x_1 \\x_2\end{pmatrix} \implies N\begin{pmatrix}\sigma_i(x_1) \\\sigma_i(x_2)\end{pmatrix} = \sigma_i(u_\theta)\begin{pmatrix}\sigma_i(x_1) \\\sigma_i(x_2)\end{pmatrix}\text{, for } i \in \{1,2\}$$
In order to match Inoue's original notation, we set $\alpha = \sigma_1(u_\theta)$, $a_i = \sigma_1(x_i)$, $b_i=\sigma_2(x_i)$ and $c_i=\frac{\Norm(x_i)}{2} + \chi(x_i,e)$. We have seen that $(a_i)$ and $(b_i)$ are eigenvectors of $N$ corresponding to the eigenvalues $\alpha$ and $\frac{1}{\alpha}$, respectively. We only have to check that there exist integers $p,q$ such that equality (\ref{InCon}) is satisfied. By simple computations, we see that this is equivalent to 
$$\begin{pmatrix} \chi((u_\theta -1)x_1,e) \\
\chi((u_\theta -1)x_2,e)\end{pmatrix} - \begin{pmatrix}n_{11}n_{12}\\
n_{21}n_{22}
\end{pmatrix} \frac{\chi(x_1,x_2)}{2} = \frac{\chi(x_1,x_2)}{r}\begin{pmatrix}p \\q \end{pmatrix}$$
so, by lemma \ref{sf},  the existence of $p$ and $q$ is equivalent to the fact that $\Gamma_{\theta,r,x_1,x_2,e;t}^{(+)}$ is in standard form. This finishes the proof.
\end{proof}

\begin{definition}\label{quo}
We will denote by $S^{(+)}_{\theta,r,x_1,x_2,e;t}$ the Inoue surface obtained as the quotient of $\HH \times \C$ by the left action of $\Gamma_{\theta,r,x_1,x_2,e;t}^{(+)}$.
\end{definition}

\begin{proposition} \label{p2}
Any Inoue surface of type $S^{(+)}$ is biholomorphic to one of the form $S^{(+)}_{\theta,r,x_1,x_2,e;t}$.
\end{proposition}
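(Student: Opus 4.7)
The plan is to reverse-engineer: given an Inoue surface of type $S^{(+)}$ specified by Inoue's data $(N, p, q, r, t)$ with eigenvectors $(a_1,a_2), (b_1,b_2)$, I will construct parameters $(\theta, r, x_1, x_2, e; t')$ so that the resulting group $\Gamma^{(+)}_{\theta, r, x_1, x_2, e; t'}$ is in standard form and coincides with Inoue's group for a biholomorphic version of the surface.

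First I would set $\theta := \Trace(N)$. Since $N \in \mathrm{SL}(2, \Z)$ has real eigenvalues $\alpha > 1$ and $1/\alpha$, $\theta = \alpha + 1/\alpha$ is an integer strictly greater than $2$, hence $\theta \geq 3$, and $K = \Q[X]/(X^2 - \theta X + 1)$ is a real quadratic field with $\sigma_1(u_\theta) = \alpha$, placing us in the framework of Section \ref{const}. Next I need $x_1, x_2 \in K$ whose associated matrix $N_{\theta, x_1, x_2}$ from (\ref{AsMat}) is $N$. Note $n_{12} \neq 0$: otherwise $N$ would be triangular with integer eigenvalues $n_{11}, n_{22}$ of product $1$, each equal to $\pm 1$, contradicting $\alpha > 1$. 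Set $x_1 := 1$ and $x_2 := (u_\theta - n_{11})/n_{12} \in K$; using $u_\theta^2 = \theta u_\theta - 1$ (equivalently $(u_\theta - n_{11})(u_\theta - n_{22}) = n_{12} n_{21}$) one checks $N \binom{x_1}{x_2} = u_\theta \binom{x_1}{x_2}$, so $I := \Z x_1 + \Z x_2$ is a fractional ideal of $\Z[u_\theta]$ realizing $N$.

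The vectors $(\sigma_1(x_1), \sigma_1(x_2))$ and $(\sigma_2(x_1), \sigma_2(x_2))$ are then eigenvectors of $N$ for $\alpha$ and $1/\alpha$, hence differ from Inoue's $(a_i)$ and $(b_i)$ by nonzero real scalars $\lambda, \mu$. By the rescaling remark recalled right after the definitions of $S^{(\pm)}_{N, p, q, r}$, replacing the eigenvector choice by these new ones yields a biholomorphic surface with adjusted parameters $(p', q', t')$. I then define $e \in K$ by the explicit formula (\ref{ee}) in the proof of lemma \ref{sf} applied to $(p', q')$; by that same lemma, $\Gamma := \Gamma^{(+)}_{\theta, r, x_1, x_2, e; t'}$ is automatically in standard form. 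Proposition \ref{p1} then identifies the explicit action of $\Gamma$ on $\HH \times \C$ with Inoue's generators $G^{(+)}_{N, p', q', r; t'}$ for the eigenvectors $(\sigma_j(x_i))$, so the original surface is biholomorphic to $S^{(+)}_{\theta, r, x_1, x_2, e; t'}$.

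The main obstacle is to make the eigenvector-rescaling claim quantitative: one must check by a direct conjugation computation (using the shift $(w,z) \mapsto (\lambda w, \mu z)$ and the system (\ref{InCon})) exactly how $(p, q, t)$ transform into $(p', q', t')$ under $(a_i, b_i) \mapsto (\lambda a_i, \mu b_i)$. Once these new parameters are in hand, the remaining comparison between our presentation and Inoue's is term-by-term bookkeeping already covered by lemma \ref{sf} and proposition \ref{p1}.
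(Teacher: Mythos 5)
Your proposal is correct and follows essentially the same route as the paper: take $\theta=\Trace(N)$, realize the eigenvectors as the two embeddings of elements $x_1,x_2$ spanning a fractional ideal of $\Z[u_\theta]$, define $e$ by formula (\ref{ee}) so that Lemma \ref{sf} and Proposition \ref{p1} apply, and absorb the change of eigenvectors into $(p,q,t)$ via the rescaling remark following the definitions. Your only departure is cosmetic but welcome: where the paper merely asserts that eigenvectors can be chosen conjugate in $\Q(\alpha)$, you exhibit them explicitly as $x_1=1$, $x_2=(u_\theta-n_{11})/n_{12}$ (with the correct observation that $n_{12}\neq 0$).
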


\begin{proof}
Let $N \in \text{SL}(2,\Z)$ with real eigenvalues $\alpha, \frac{1}{\alpha}$, $\alpha >1$. Let $p,q,r$ be integers, $r>0$, and let $t\in \C$. Clearly, $\alpha$ is the root of the polynomial $X^2-\theta X+1$, where $\theta$ is the trace of $N$. Thus, if we let $K\coloneq \Q[X]/(X^2 -\theta X + 1)$ and $u_\theta = \hat{X} \in K$, then $\sigma_1(u_\theta) = \alpha$. Next, we observe that it is always possible to choose the two eigenvectors $(a_1,a_2)$ and $(b_1,b_2)$ such that $(a_1,a_2), (b_1,b_2) \in \Q(\alpha)^2$ and having the property that $a_i$ and $b_i$ are conjugate in $\Q(\alpha)$, for $i\in \{1,2\}$. This implies that there exist $x_1,x_2 \in K$ such that $\sigma_1(x_i)=a_i$, $\sigma_2(x_i)=b_i$, for $i\in\{1,2\}$. Then, the subgroup $_\Z\langle x_1,x_2 \rangle \leq K$ is a fractional ideal of the order $\Z[u_\theta]$. Let $e\in K$ be defined by expression \ref{ee}. Then, we have that the Inoue surface $S^{(+)}_{N,p,q,r;t}$ (with eiegenvectors $(a_i)$ and $(b_i)$ chosen as above) is biholomorphic to $S^{(+)}_{\theta,r,x_1,x_2,e;t}$.
\end{proof}

\noindent
For the remainder of this section, we will analyze the symmetries of the parameters 
$$(\theta, r, x_1, x_2, e; t)$$
up to biholomorphism (or deformation equivalence) of the Inoue surface they define (Lemma \ref{Sym}). The statements of the Lemma below are equivalent to the results in \cite{KT} so we will not prove them. This is also true for the final proposition of the section, which is the main classification result of \cite{KT} rephrased in terms of our construction.

\begin{lemma}\label{Sym}
Let $S$ be the Inoue surface $S_{\theta, r, x_1,x_2,e;t}^{(+)}$. Using the same notation as in definition \ref{gr}, let $K = \Q[X]/(X^2-\theta X + 1)$; $u_\theta = \hat{X} \in K$ and $I = {}_\Z\langle x_1, x_2\rangle$.
\begin{casenv}
\item If the surfaces $S$ and $S_{\theta',r',x_1',x_2',e';t'}^{(+)}$ are biholomorphic, then $\theta = \theta'$ and $r=r'$. In other words, the pair $(\theta,r)$ is a biholomorphism invariant.
\item If the surfaces $S$ and $S_{\theta,r,x_1',x_2',e';t'}^{(+)}$ are biholomorphic, then the fractional ideals $_\Z\langle x_1,x_2\rangle$ and $_\Z\langle x_1',x_2'\rangle$ represent the same ideal class in $\text{Cl}(\Z[u_\theta])$.
\item If $(y_1,y_2)$ is another $\Z$-basis for the fractional ideal $I={}_\Z\langle x_1,x_2\rangle$, then there exists some $e' \in K$ such that $S \simeq S_{\theta,r,y_1,y_2,e';t}^{(+)}$.
\item Let $y$ be a nonzero element of the field $K$. If $\sigma_1(y)>0$, then $S \simeq S_{\theta,r, yx_1,yx_2,ye;\Norm(y)t}^{(+)}$. Similarly, if $\sigma_1(y)<0$, then $S \simeq S_{\theta,r, yx_1,yx_2,-ye;\Norm(y)t}^{(+)}$. Combining the statements iii) and iv), we see that if $I={}_\Z\langle x_1,x_2\rangle$ and  $J={}_\Z\langle y_1,y_2\rangle$ represent the same ideal class in $\text{Cl}(\Z[u_\theta])$, then there exist some $e' \in K$ and $t' \in \C$, such that $S \simeq S_{\theta,r,y_1,y_2,e';t'}^{(+)}$.
\item Let $e' \in K$ with $e-e' \in Ir^{-1}$, then $S \simeq S^{(+)}_{\theta, r , x_1, x_2, e';t}$.
\item Let $e' \in K$ with $e-e' \in I(1-u_\theta)^{-1}$, then there exists some $t' \in \C$ such that $S \simeq S^{(+)}_{\theta, r, x_1, x_2, e';t'}$. Combining statements v) and vi), we see that changing just the parameter $e$ by an element in $Ir^{-1} + I(1-u_\theta)^{-1}$ does not change the deformation class of the surface.
\item If $r$ is even, then for any $u\in {}^I[\OO_K^{\times,>}]$, there exists $t' \in \C$, such that $S \simeq S^{(+)}_{\theta,r,x_1,x_2,ue;t}$. If $r$ is odd, then $e$ may be decomposed as $e=e_1 + e_2$, where $e_1 \in Ir^{-1}(1-u_\theta)^{-1}$ and $e_2 \in I2^{-1}(1-u_\theta)^{-1}$. Let $e' = ue_1 + e_2$, then there exist $t'$, such that $S \simeq S^{(+)}_{\theta, r, x_1,x_2,e';t'}$.
\end{casenv}
\end{lemma}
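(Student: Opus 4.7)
The strategy is uniform: each biholomorphism claim $S \simeq S'$ is proved by exhibiting an explicit element of $\Aff(\HH \times \C)$, typically inside $\mathcal{G}$, whose conjugation carries $\Gamma = \Gamma^{(+)}_{\theta,r,x_1,x_2,e;t}$ to $\Gamma'$, while each invariance claim is proved by identifying a group-theoretic datum that any such conjugation must preserve. Theorem \ref{af} and Remark \ref{Emb} together ensure that these two viewpoints match.

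For (i) and (ii), I would extract $(\theta, r)$ and the ideal class intrinsically. The integer $r$ is the index of $\langle g_3\rangle$ inside the full center $Z(\Gamma) = \Gamma \cap \{[1, 0, \C]\}$. The quotient $\Lambda := (\Gamma \cap \{[1, K, \C]\})/Z(\Gamma)$ is isomorphic to $I$ as an abelian group and carries a natural $\Z[u_\theta]$-module structure, with $u_\theta$ acting via conjugation by $g_0$; hence $\theta = \Trace(N_{\theta, x_1, x_2})$ and the ideal class $[I] \in \text{Cl}(\Z[u_\theta])$ are both invariants of the abstract group structure.

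For (iii)--(vi) I would compute directly inside $\mathcal{G}_\theta^{(+)}$. Expressing a new basis as $y_i = m_{i1} x_1 + m_{i2} x_2$ with $M = (m_{ij}) \in \text{GL}(2,\Z)$ and evaluating $g_1^{m_{i1}} g_2^{m_{i2}}$, the group law yields $[1, y_i, \chi(y_i, e) - m_{i1} m_{i2} \chi(x_1, x_2)/2]$; the discrepancy from the target generator is absorbed by choosing $e' \in K$ differing from $e$ by a suitable element (up to powers of $g_3$), proving (iii). For (iv) I would conjugate by the affine map $\phi(w, z) = (|\sigma_1(y)| w, \Norm(y) z) \in \Aff(\HH \times \C)$; using $\chi(yx_1, yx_2) = \Norm(y)\chi(x_1, x_2)$, a direct check gives $\phi \Gamma \phi^{-1} = \Gamma^{(+)}_{\theta, r, yx_1, yx_2, \pm ye; \Norm(y) t}$, the sign being that of $\sigma_1(y)$. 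For (v), writing $rf = m_1 x_1 + m_2 x_2$ with $m_i \in \Z$ gives $\chi(x_i, f) \in (\chi(x_1, x_2)/r)\Z$, so $g_i(e + f)$ differs from $g_i(e)$ only by a power of $g_3$ and the subgroup is unchanged. For (vi), I would conjugate by $[1, f, 0] \in \mathcal{G}_\theta^{(+)}$: the group law gives
$$[1, f, 0]\, g_i\, [1, -f, 0] = [1, x_i, \chi(x_i, e+f)],$$
$$[1, f, 0]\, g_0\, [1, -f, 0] = \left[u_\theta,\ (1-u_\theta) f,\ t + \tfrac{1}{2}\Norm(f)(\sigma_2(u_\theta) - \sigma_1(u_\theta))\right],$$
so the hypothesis $(1-u_\theta) f \in I$ lets us multiply $g_0$ by an integer combination of $g_1, g_2$ to kill its $x$-component, absorbing the remaining scalar into a new $t'$.

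The main obstacle is (vii). For $u \in {}^I[\OO_K^{\times,>}]$, I would conjugate by the affine dilation $(w, z) \mapsto (|\sigma_1(u)| w, \Norm(u) z)$; since $\Norm(u) = 1$ and $uI = I$, this fixes $g_0$ and $g_3$ but sends $g_i$ to $[1, u x_i, \chi(u x_i, u e)]$. Because $(u x_1, u x_2)$ is another $\Z$-basis of $I$, applying (iii) to return to the basis $(x_1, x_2)$ yields a group with parameter $e'$ congruent to $ue$ modulo a correction lying in $\tfrac12 Ir^{-1}$. When $r$ is even this correction lies in $Ir^{-1}$ and is absorbed by (v), giving the cleaner statement $e \mapsto ue$; when $r$ is odd, $2$ is not invertible modulo $r$, so only the component of $e$ lying in $I2^{-1}(1 - u_\theta)^{-1}$ can be freely corrected by a combination of (v) and (vi), while the component in $Ir^{-1}(1 - u_\theta)^{-1}$ acquires the multiplication by $u$, yielding $e \mapsto ue_1 + e_2$. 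Making the decomposition $e = e_1 + e_2$ explicit and tracking the denominators $r$ and $2$ as they propagate through the conjugations is where the main computational work would lie.
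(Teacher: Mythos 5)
The paper offers no proof of Lemma \ref{Sym} to compare against: it states that the claims are equivalent to results in \cite{KT} and defers entirely to that reference. Your overall strategy --- explicit conjugations inside $\Aff(\HH\times\C)$ (computed via the group law \ref{Op}) for the existence statements iii)--vii), and intrinsic group-theoretic invariants for i) and ii) --- is the natural one and is consistent with how \cite{KT} proceeds; your computations for iii), iv), v) and vi) check out against the group law.

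Two concrete points are wrong as written. First, your extraction of $r$ in i): since $\Gamma$ is in standard form, $\langle g_3\rangle$ \emph{is} the full center, so the index you describe is $1$, not $r$. The invariant you want is the index in $Z(\Gamma)$ of the image of the commutator pairing $\Lambda\times\Lambda\to Z(\Gamma)$; one computes $[g_1,g_2]=g_3^{r}$, so this index is $r$ and is preserved by any isomorphism. Second, in vii) you assert $\Norm(u)=1$ for $u\in{}^I[\OO_K^{\times,>}]$; this is false in general, since membership in $\OO_K^{\times,>}$ only constrains the sign of $\sigma_1(u)$ (in Example \ref{ex1} the generator $u=1+\sqrt2$ has norm $-1$). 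When $\Norm(u)=-1$ the dilation sends $g_3$ to $g_3^{-1}$ (harmless) but sends $g_0=[u_\theta,0,t]$ to $[u_\theta,0,-t]$ and multiplies the central components of the $g_i$ by $-1$, so the sign must be tracked through the subsequent change of basis; the argument survives but not verbatim. A smaller gap: in ii) the $\Z[u_\theta]$-module structure on $\Lambda$ depends on the choice of generator of $\Gamma/(\Gamma\cap\mathcal H)\simeq\Z$, and the other choice acts by $u_\theta^{-1}=\varphi(u_\theta)$, yielding a module isomorphic to $\varphi(I)$ rather than $I$; to pin down the class you should use that the lift of a biholomorphism lies in $\Aff(\HH\times\C)$ (Theorem \ref{af}) and acts on $\HH$ by $w\mapsto\mu w+l$ with $\mu>0$, which forces the induced map $I\to I'$ to be multiplication by an element of $K$ and hence forces $[I]=[I']$ directly.
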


\begin{proposition}
Fix $\theta$ and $r$, then for any ideal class $[I]$ in $\text{Cl}(\Z[u_\theta])$, the set of deformation classes of Inoue surfaces of type $S^{(+)}_{\theta,r,x_1,x_2,e;t}$ having $_\Z\langle x_1,x_2\rangle \in [I]$ is in bijection to $I/I(r, 1-u_\theta)$ modulo the multiplicative action of ${}^I[\OO_K^{\times,>}]$.
\end{proposition}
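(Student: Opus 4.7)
The plan is to fix representatives at each stage and read off the classifying set directly from Lemma \ref{Sym}. By parts i) and ii) of that lemma, $(\theta, r)$ and the ideal class $[I] \in \text{Cl}(\Z[u_\theta])$ are deformation invariants, so I would fix them once and for all, along with a representative fractional ideal $I$ and a $\Z$-basis $(x_1, x_2)$ of $I$. Part iv) allows multiplying an arbitrary basis $(y_1, y_2)$ of some ideal $J \in [I]$ by a suitable $y \in K^\times$ to bring its underlying ideal to $I$, and part iii) then switches to the basis $(x_1, x_2)$. Hence every deformation class of the indicated type is represented by some $S^{(+)}_{\theta, r, x_1, x_2, e; t}$, and the problem reduces to identifying the redundancy in the pair $(e, t)$.

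Next, I would describe the admissible $e$. By Lemma \ref{sf}, the standard form condition reads
$$\frac{1-u_\theta}{u_\theta}e + \frac{n_{21}n_{22}}{2}x_1 - \frac{n_{11}n_{12}}{2}x_2 \in Ir^{-1},$$
and multiplying by $u_\theta/(1-u_\theta)$ (legitimate since $u_\theta$ is a unit of $\Z[u_\theta]$) presents the admissible $e$ as an affine torsor over $Ir^{-1}(1-u_\theta)^{-1}$ with base point $e_0 \in I 2^{-1}(1-u_\theta)^{-1}$. The parameter $t$ plays no role for the deformation class, since $\C$ is connected and varying $t$ yields a holomorphic family. So the problem reduces to the quotient of this affine space by the remaining symmetries.

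I would then apply the equivalences in Lemma \ref{Sym}. Parts v) and vi) together say $e$ is defined modulo $Ir^{-1} + I(1-u_\theta)^{-1}$ (with $t$ possibly adjusted, but this is harmless). Multiplication by $r(1-u_\theta)$ is a $\Z$-linear isomorphism $Ir^{-1}(1-u_\theta)^{-1} \to I$ sending $Ir^{-1}$ to $I(1-u_\theta)$ and $I(1-u_\theta)^{-1}$ to $Ir$, hence the ideal of equivalences to $I(1-u_\theta) + Ir = I(r, 1-u_\theta)$. Translating the affine space by $-e_0$ and then applying this isomorphism produces a natural bijection with $I/I(r, 1-u_\theta)$. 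Finally, part vii) encodes the action of $u \in {}^I[\OO_K^{\times,>}]$: in the even-$r$ case $e \mapsto ue$ transports directly to multiplication by $u$; in the odd-$r$ case the change $e = e_1 + e_2 \mapsto ue_1 + e_2$ modifies $e$ by $(u-1)e_1 \in (u-1)Ir^{-1}(1-u_\theta)^{-1}$, which multiplication by $r(1-u_\theta)$ sends into $(u-1)I$, so the induced action on $I/I(r, 1-u_\theta)$ is again multiplication by $u$.

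Assembling these steps yields the claimed bijection. The main obstacle I foresee is sharpness, i.e.\ showing that Lemma \ref{Sym} really exhausts all deformation-equivalences, so that distinct orbits in $I/I(r, 1-u_\theta)$ modulo ${}^I[\OO_K^{\times,>}]$ really correspond to distinct deformation classes. For this I would invoke Theorem \ref{af}: any biholomorphism between two such Inoue surfaces lifts to an affine map of the universal cover; pulling back generators and matching entries in the group law \ref{Op} then forces the conjugating element to realize one of the symmetries already enumerated in Lemma \ref{Sym}. This is essentially the classification theorem of \cite{KT} rephrased in the present language.
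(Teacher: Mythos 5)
First, a point of reference: the paper does not prove this proposition at all. It states explicitly that the proposition is ``the main classification result of \cite{KT} rephrased in terms of our construction'' and defers the proof, so there is no in-paper argument to compare yours against. Your plan to derive it from Lemma \ref{Sym} is the natural one, and the ``well-definedness'' direction of your argument is essentially sound: parts i)--iv) reduce to a fixed $I$ and fixed basis, Lemma \ref{sf} exhibits the admissible $e$ as the coset $e_0 + Ir^{-1}(1-u_\theta)^{-1}$ with $e_0 \in I2^{-1}(1-u_\theta)^{-1}$, and multiplication by $r(1-u_\theta)$ identifies the quotient by $Ir^{-1}+I(1-u_\theta)^{-1}$ with $I/I(r,1-u_\theta)$. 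One caveat on the unit action: after translating by $-e_0$, the move $e \mapsto ue$ becomes $\xi \mapsto u\xi + (u-1)e_0$, and for even $r$ you still owe the verification that $(u-1)e_0$ lies in $Ir^{-1}+I(1-u_\theta)^{-1}$ (it lies a priori only in $I2^{-1}(1-u_\theta)^{-1}$, and $(r/2)$ need not belong to the ideal $(r,1-u_\theta)$); for odd $r$ your argument works cleanly only if you choose the decomposition $e_1 = e-e_0$, $e_2=e_0$ in part vii), which you should say explicitly.

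The genuine gap is injectivity, and your proposed fix does not close it. Theorem \ref{af} (even granting its extension from automorphisms of one surface to biholomorphisms between two surfaces) controls \emph{biholomorphism} classes, but the proposition is about \emph{deformation} classes, which form a strictly coarser equivalence: varying $t$, and varying $e$ within $I(1-u_\theta)^{-1}$ as in part vi), changes the biholomorphism type but not the deformation class. To show that distinct orbits in $I/I(r,1-u_\theta)$ modulo ${}^I[\OO_K^{\times,>}]$ give distinct deformation classes, you must show that this discrete invariant is locally constant in an arbitrary holomorphic family of surfaces of this type, which requires deformation-theoretic input (the structure of the versal deformation, tied to Theorem \ref{dim} and $h^1(\mathcal{T}_X)$), not just the affinity of lifts. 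Your closing sentence concedes the point by appealing to ``the classification theorem of \cite{KT}'' --- but that theorem \emph{is} the proposition being proved, so as written the argument for sharpness is circular, and in effect you end where the paper itself does: by citation.
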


\subsection{The connected component of the identity}\label{concom} We are now ready to study the groups of biholomorphisms of Inoue surfaces of type $S^{(+)}$. In this section we will show that the connected component of the identity of the automorphism group of such surface is isomorphic to $\C^*$.\par

\noindent
Let $\theta \geq 3$ and $r > 0$ be integers and $t \in \C$. Let $K=\Q[X]/(X^2 - \theta X + 1)$ and $u_\theta \coloneq \hat{X} \in K$. Let $I = {}_\Z\langle x_1,x_2\rangle$ be a fractional ideal of $\Z[u_\theta]$ and let $e \in K$ . 
Let $X = S^{(+)}_{\theta, r, x_1, x_2, e ; t}$ and $\Gamma = \Gamma^{(+)}_{\theta,r,x_1,x_2,e;t}$; we know that $\Gamma \leq \Aff(\HH \times \C)$ and, from the theorem \ref{af}, we see that 
$$\Aut(X) \simeq \Nc(\Gamma)/ \Gamma$$
where $\Nc(\Gamma)$ is the normalizer of $\Gamma$ in $\Aff(\HH \times \C)$.

\begin{proposition}
The connected component of the identity of the complex Lie group $\Aut(X)$ is isomorphic to $\C^*$.
\end{proposition}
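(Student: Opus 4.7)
The plan is to realize a one-parameter complex subgroup $C \simeq \C$ inside the normalizer $\Nc(\Gamma)$, compute $C \cap \Gamma$, and conclude via the dimension count from Theorem \ref{dim}(i) and Theorem \ref{bm} that the resulting quotient $\bar C$ is exactly the identity component of $\Aut(X)$.

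First I would introduce the subgroup
\[
C \coloneqq \{[1, 0, t] \in \mathcal{G}_\theta^{(+)} : t \in \C\},
\]
whose action on $\HH \times \C$ is the pure translation $(w,z) \mapsto (w, z+t)$, so that $C \simeq \C$ as a complex Lie group. Using $\Norm(u_\theta) = 1$ together with the group law \eqref{Op}, a direct calculation shows $[1,0,t]$ commutes with each of the generators $g_0, g_1, g_2, g_3$ of $\Gamma$. Hence $C$ centralizes $\Gamma$, so $C \subseteq \Nc(\Gamma)$, and the image $\bar C \coloneqq C\Gamma/\Gamma \simeq C/(C \cap \Gamma)$ is a connected complex Lie subgroup of $\Aut(X) \simeq \Nc(\Gamma)/\Gamma$.

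The key step is to identify $C \cap \Gamma$. An element of $\Gamma$ whose $u_\theta$-coordinate is trivial must lie in the subgroup $H \coloneqq \langle g_1, g_2, g_3\rangle$: using the standard-form relations $g_0 g_i g_0^{-1} \equiv g_1^{n_{i1}} g_2^{n_{i2}} \pmod{\langle g_3\rangle}$ provided by Lemma \ref{sf}, any word in $\Gamma$ with net $g_0$-exponent zero can be rewritten as a word in $H$. Since $g_3 \in C$ is central in $\Gamma$ and a short computation gives $[g_1, g_2] = g_3^{r}$, every element of $H$ admits a canonical form $g_1^{n_1} g_2^{n_2} g_3^{n_3}$, whose middle coordinate is $n_1 x_1 + n_2 x_2$. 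This vanishes if and only if $n_1 = n_2 = 0$, because $(x_1, x_2)$ is a $\Z$-basis of the fractional ideal $I$. Therefore
\[
C \cap \Gamma = \langle g_3 \rangle = \Z \cdot \left(-\tfrac{\chi(x_1, x_2)}{r}\right) \subset \R \subset \C,
\]
a rank-one lattice in $C$.

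Consequently $\bar C \simeq \C / \Z\lambda$ with $\lambda \in \R^*$, which is isomorphic to $\C^*$ via the exponential map. Theorems \ref{dim}(i) and \ref{bm} give $\dim_\C \Aut(X) = 1$, so the 1-dimensional connected subgroup $\bar C$ is open in $\Aut(X)$ and therefore coincides with the identity component. The main subtlety I expect is the precise determination of $C \cap \Gamma$: one must rule out exotic products in the generators that could contribute pure $z$-translations outside $\langle g_3\rangle$, and this is handled by the canonical-form argument in $H$, which rests ultimately on the $\Q$-linear independence of $x_1$ and $x_2$.
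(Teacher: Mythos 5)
Your proposal is correct and follows essentially the same route as the paper: your subgroup $C$ is exactly the paper's group $T_\C$ of $z$-translations, the identification $C\cap\Gamma=\langle g_3\rangle$ is the same computation the paper invokes, and the conclusion $T_\C/(T_\C\cap\Gamma)\simeq\C/d\Z\simeq\C^*$ together with the dimension count from Theorems \ref{dim} and \ref{bm} is identical. The only cosmetic difference is that the paper packages this into the short exact sequence $1\to\C^*\to\Aut(X)\to Q\to 1$ (setting up the group $Q$ for the next section), whereas you spell out the verifications the paper labels as straightforward, such as $T_\C$ centralizing $\Gamma$ and the normal form in $\langle g_1,g_2,g_3\rangle$.
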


\begin{proof}
Let $T_\C$ be the subgroup of translations of $\HH \times \C$ in the $z$-direction, that is:
\begin{equation} \label{trans}
T_\C = 
\left\{
\begin{pmatrix}
w \\
z 
\end{pmatrix} \to
\begin{pmatrix}
w \\
z 
\end{pmatrix} +
\begin{pmatrix}
0 \\
s 
\end{pmatrix} \vert s\in \C
 \right\}
\end{equation} \label{trans}
It is easy to see that $T_\C \unlhd \Nc(\Gamma)$, hence we have the following diagram of normal subgroups:
\[\begin{tikzcd}
	& {\Nc(\Gamma)} \\
	& {\Gamma\cdot T_\C} \\
	\Gamma && {T_\C} \\
	& {\Gamma\cap T_\C}
	\arrow[no head, from=1-2, to=2-2]
	\arrow[no head, from=2-2, to=3-3]
	\arrow[no head, from=3-1, to=2-2]
	\arrow[no head, from=4-2, to=3-1]
	\arrow[no head, from=4-2, to=3-3]
\end{tikzcd}\]
Since $$\frac{\Nc(\Gamma)}{(\Gamma \cdot T_\C)} \simeq \frac{\Nc(\Gamma)/\Gamma}{(\Gamma \cdot T_\C)/\Gamma}$$ there is a short exact sequence of groups:
\[\begin{tikzcd}
	1 & {\frac{\Gamma \cdot T_\C}{\Gamma}} & {\Aut(X)} & {\frac{\Nc(\Gamma)}{\Gamma\cdot T_\C}} & 1
	\arrow[from=1-1, to=1-2]
	\arrow[from=1-2, to=1-3]
	\arrow[from=1-3, to=1-4]
	\arrow[from=1-4, to=1-5]
\end{tikzcd}\]
We observe that $$\frac{\Gamma \cdot T_\C}{\Gamma} \simeq \frac{T_\C}{\Gamma \cap T_\C} \simeq \frac{\C}{d\Z}\simeq \C^*$$
where $d = \frac{\chi(x_1,x_2)}{r}$, also we have that $$\frac{\Nc(\Gamma)}{\Gamma \cdot T_\C} \simeq \frac{\Nc(\Gamma)/ T_\C}{(\Gamma \cdot T_\C)/ T_\C} \simeq \frac{\Nc(\Gamma)/ T_\C}{\Gamma / (\Gamma \cap T_\C)}$$
Denote this last quotient by $Q$. The short exact sequence becomes:
\[\begin{tikzcd}
	1 & {\C^*} & {\Aut(X)} & Q & 1
	\arrow[from=1-1, to=1-2]
	\arrow[from=1-2, to=1-3]
	\arrow[from=1-3, to=1-4]
	\arrow[from=1-4, to=1-5]
\end{tikzcd}\]
This is a short exact sequence of complex Lie groups and, since $\Aut(X)$ is of dimension $1$, $\C^*$ is the connected component of the identity and $Q$ is discrete. In fact, as we shall see in the following section, it is finite. 
\end{proof}

\subsection{The connected components of $\Aut(X)$ (the group $Q$)}\label{gpq}
As before, let $\Gamma = \Gamma^{(+)}_{\theta,r,x_1,x_2,e;t}$
be the group defined in \ref{gr}, and let $X = S^{(+)}_{\theta,r,x_1,x_2,e;t}$ denote the corresponding Inoue surface. In this section, we develop an algorithm for determining the group of connected components of $\Aut(X)$, previously denoted by $Q$. The strategy is as follows: we begin by introducing a finite metabelian group $\mathcal{H}_{\theta,[I]}$, which is constructed from the number-theoretic parameters defining $X$. We then prove that $Q$ embeds as a subgroup of $\mathcal{H}_{\theta,[I]}$, thereby showing that $Q$ is itself finite and metabelian. Next, we give two easily verifiable conditions that characterize when a given element of $\mathcal{H}_{\theta,[I]}$ belongs to $Q$. Finally, we collect these steps into an algorithm: starting with the parameters $(\theta,r,x_1,x_2,e;t)$, one constructs $\mathcal{H}_{\theta,[I]}$, applies the membership conditions, and thus obtains the desired description of $Q$.

\begin{definition}\label{NTgroup}
Using the same notations as in definition \ref{msub} above, we define the following group:
$$\mathcal{H}_{\theta, [I]}\coloneq \frac{{}^I[\OO_K^{\times,>}]\ltimes I (1-u_\theta)^{-1}}{\langle u_\theta \rangle \ltimes I} \simeq\frac{{}^I[\OO_K^{\times,>}]}{\langle u_\theta \rangle} \ltimes \frac{I(1-u_\theta)^{-1}}{I}$$
where ${}^I[\OO_K^{\times,>}]$ acts on $I (1-u_\theta)^{-1}$ by multiplication.
\end{definition}

\noindent
It is easy to see that, up to isomorphism, the group $\mathcal{H}_{\theta, [I]}$ depends only on the class of $I$ in the ideal class monoid $\text{Cl}(\Z[u_\theta])$. 

\begin{proposition} \label{bound} Let $Q$ denote the group of connected components of the automorphism group of the surface $X$. Then, $Q$ embeds as a subgroup of $\mathcal{H}_{\theta, [I]}$ (definition \ref{NTgroup}). In particular, it follows that $Q$ is finite and metabelian.
\end{proposition}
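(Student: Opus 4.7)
My plan is to exhibit an explicit group homomorphism
\[
\Phi : \Nc(\Gamma) \longrightarrow {}^I[\OO_K^{\times,>}] \ltimes I(1-u_\theta)^{-1}
\]
whose kernel contains $T_\C$ and whose restriction to $\Gamma$ has image $\langle u_\theta\rangle\ltimes I$; the induced quotient map will then supply the embedding $Q\hookrightarrow\mathcal{H}_{\theta,[I]}$. By Theorem \ref{af}, any $\phi\in\Nc(\Gamma)$ is affine, of the form $(w,z)\mapsto(\mu w+l,\ \lambda w+\nu z+s)$. The first step is to show $\nu\in\{\pm 1\}$: conjugating the central generator $g_3$ by $\phi$ produces the translation $(w,z)\mapsto(w,z+\nu d)$ with $d=-\chi(x_1,x_2)/r$, which must lie in $\Gamma\cap T_\C=\langle g_3\rangle$, and invertibility of $\phi$ then forces $\nu\in\{\pm 1\}$. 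Hence $\phi$ lies inside the Lie group $\mathcal{G}$ of Remark \ref{Emb} and can be written in the coordinates $[\alpha,(a,b,c)]$ of \eqref{Op}.

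The next step is to constrain the remaining parameters by conjugating $g_1,g_2,g_0$ and using the group law on $\mathcal{G}_\theta^{(+)}$ from Definition \ref{Incl}. A direct computation shows $\phi g_i\phi^{-1}$ has first coordinate $w+\mu\sigma_1(x_i)$, so membership in $\Gamma$ forces the translation to equal $\sigma_1(ux_i)$ for a \emph{common} element $u\in K^\times$ satisfying $\sigma_1(u)=\mu$, $\sigma_2(u)=\nu/\mu$; matching second coordinates confirms this and yields $ux_i\in I$ for $i=1,2$. Applying the same to $\phi^{-1}$ upgrades $uI\subseteq I$ to $uI=I$, so $u\in{}^I[\OO_K^{\times,>}]$ (Definition \ref{msub}). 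A parallel computation for $g_0=[u_\theta,0,t]$ gives a conjugate of the form $[u_\theta,(1-u_\theta)x,*]$, where $x\in K$ satisfies $\sigma_1(x)=l$; for this to lie in $\Gamma$ we must have $(1-u_\theta)x\in I$, i.e.\ $x\in I(1-u_\theta)^{-1}$.

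These constraints produce an assignment $\Phi(\phi)=(u,x)$ that is independent of the central parameter $s\in\C$, so it factors through $\Nc(\Gamma)/T_\C$. Comparing composition of affine maps in $\mathcal{G}$ with the semidirect-product law $(u_1,x_1)(u_2,x_2)=(u_1u_2,\ x_1+u_1x_2)$ (modulo $T_\C$) shows $\Phi$ is a group homomorphism. On the generators of $\Gamma$ it sends $g_0\mapsto(u_\theta,0)$ and $g_i\mapsto(1,x_i)$; hence $\Phi(\Gamma\cdot T_\C)=\langle u_\theta\rangle\ltimes I$, and the induced map is the desired injection $\overline{\Phi}:Q\hookrightarrow\mathcal{H}_{\theta,[I]}$. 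Finiteness of the target is immediate: $[{}^I[\OO_K^{\times,>}]:\langle u_\theta\rangle]$ is finite by \eqref{index}, and $|I(1-u_\theta)^{-1}/I|=|\Norm(1-u_\theta)|<\infty$. Since $\mathcal{H}_{\theta,[I]}$ is a semidirect product of two abelian groups it is metabelian, and $Q$ inherits this property.

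The main technical point will be the conjugation of $g_0$: carefully tracking the Heisenberg-type correction terms in the $z$-coordinate is needed to identify the denominator $(1-u_\theta)^{-1}$ cleanly and to confirm that $\Phi$ is genuinely a homomorphism rather than a mere set-theoretic bijection. The other steps reduce to routine manipulations with the Baker--Campbell--Hausdorff formula \eqref{Op}.
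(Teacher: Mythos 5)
Your proof is correct and takes essentially the same route as the paper: the paper's Lemma \ref{norm} likewise conjugates the generators $g_0,g_1,g_2$ by an element of $\Nc(\Gamma)$ (after passing to $\Aff(\HH\times\C)/T_\C$) to show every normalizer element has the form $[\mu,l,s]$ with $\mu\in{}^I[\OO_K^{\times,>}]$ and $l\in I(1-u_\theta)^{-1}$, and then obtains the embedding $Q\leq\mathcal{H}_{\theta,[I]}$ from $\Gamma/(\Gamma\cap T_\C)\simeq\langle u_\theta\rangle\ltimes I$. Your explicit homomorphism $\Phi$ is exactly the paper's quotient-by-$T_\C$ map written out, so the two arguments coincide in substance.
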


\begin{proof}
In the previous section we have seen that 
$$Q \simeq \frac{\Nc(\Gamma)/T_\C}{\Gamma/(\Gamma \cap T_\C)}$$
where $\Nc(\Gamma)$ denotes the normalizer of $\Gamma $ in $\Aff(\HH \times \C)$ and $T_\C$ is defined by \ref{trans}. It is straightforward to verify that $\Gamma \cap T_\C = \langle g_3 \rangle $ and hence
\begin{equation}\label{deno}
\Gamma/ (\Gamma \cap T_\C) \simeq \langle u_\theta\rangle \ltimes I
\end{equation}
In the upcoming lemma (\ref{norm}) we prove that $\Nc(\Gamma)$ is a subgroup of 
$$\left\{[\mu,l,s]\in \mathcal{G}^{(+)}_\theta\ \vert\ \mu \in {}^I[\OO_K^{\times, >}],\ l \in I(1-u_\theta)^{-1},\ s\in \C\right\}$$
It follows that 
\begin{equation}\label{nume}
\Nc(\Gamma)/T_\C \leq {}^I[\OO_K^{\times, >}] \ltimes I(1-u_\theta)^{-1}
\end{equation}
From (\ref{deno}) and $\ref{nume}$, we get that $ Q \leq \mathcal{H}_{\theta,[I]}$, which is the desired conclusion.
\end{proof}

\begin{lemma}\label{norm} 
Let  $\Nc(\Gamma)$ be the normalizer of $\Gamma$ in $\Aff(\HH \times \C)$. Then, $\Nc(\Gamma)$ is contained in $\mathcal{G}^{(+)}_\theta$ (\ref{Incl}). Moreover, if $h$ is an element of $\Nc(\Gamma)$, then, viewed as an element of $\mathcal{G}_{\theta}^{(+)}$, $h$ is of the form $[\mu,l,s]$ with $\mu \in {}^I[\OO_K^{\times, >}]$, $l \in I(1-u_\theta)^{-1}$ and $s \in \C$.
\end{lemma}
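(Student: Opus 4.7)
The plan is to write an arbitrary $h \in \Nc(\Gamma) \subseteq \Aff(\HH \times \C)$ as the matrix
\[
h = \begin{pmatrix} N & \Lambda & S \\ 0 & A & L \\ 0 & 0 & 1 \end{pmatrix}
\]
with $A \in \R_{>0}$, $L \in \R$, $N \in \C^*$, $\Lambda, S \in \C$, and to impose $h\gamma h^{-1} \in \Gamma$ for each generator $\gamma \in \{g_0, g_1, g_2, g_3\}$ in turn, converting the resulting real equalities into identities inside the number field $K$.

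First I would note that $\Gamma \cap T_\C = \langle g_3 \rangle$ coincides with the subgroup of $\Gamma$ acting trivially on $w$: from the explicit action of $[u_\theta^k, x, s] \in \Gamma$, trivial $w$-action forces $\sigma_1(u_\theta^k) = 1$ and $\sigma_1(x) = 0$, and the injectivity of $\sigma_1$ on $K$ together with $\sigma_1(u_\theta) > 1$ gives $k = 0$ and $x = 0$. A direct computation then yields $h g_3 h^{-1}(w,z) = (w, z - Nd)$ with $d = \chi(x_1,x_2)/r$, so $N \in \Z$; applying the same argument to $h^{-1}$ yields $N = \pm 1$.

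Next I would conjugate $g_i$ ($i = 1, 2$) by $h$ to obtain an element with $w$-scale $1$, $w$-translation $A a_i$, and $z$-slope $N b_i/A$, where $a_i = \sigma_1(x_i)$, $b_i = \sigma_2(x_i)$. Since it lies in $\Gamma \subseteq \mathcal{G}_\theta^{(+)}$ it must be of the form $[1, y_i, s_i]$ with $y_i \in I$, and matching the action gives $\sigma_1(y_i) = A\sigma_1(x_i)$, $\sigma_2(y_i) = N\sigma_2(x_i)/A$. Setting $u \coloneqq y_1/x_1 \in K^*$ we read off $\sigma_1(u) = A$, $\sigma_2(u) = N/A$, and $\Norm(u) = N = \pm 1$; uniqueness via the embeddings forces $y_2/x_2 = u$ as well, so $ux_i \in I$ for both $i$, hence $uI \subseteq I$. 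Applying the same argument to $h^{-1}$ gives $u^{-1}I \subseteq I$, so $uI = I$. This inclusion forces $u$ into the multiplier ring of $I$ (an order inside $\OO_K$), and the norm condition makes $u$ a unit, placing it in ${}^I[\OO_K^{\times,>}]$.

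Finally, conjugating $g_0$ gives
\[
h g_0 h^{-1}(w,z) = \bigl(\alpha w + L(1-\alpha),\ z + \tfrac{\Lambda(\alpha - 1)}{A}\, w + (\text{const})\bigr),
\]
with $\alpha = \sigma_1(u_\theta)$. Matching with $[u_\theta, y, s] \in \Gamma$ forces $\sigma_1(y) = L(1-\alpha)$ and $\sigma_2(y) = \Lambda(\alpha-1)/(A\alpha)$. Setting $x \coloneqq y/(1 - u_\theta)$ then yields $\sigma_1(x) = L$ and $\sigma_2(x) = \Lambda/A$, while the condition $y \in I$ is precisely $x \in I(1-u_\theta)^{-1}$; in particular $\Lambda = A \sigma_2(x) \in \R$. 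Altogether $h = [u, x, S - \Norm(x)/2] \in \mathcal{G}_\theta^{(+)}$ with $u \in {}^I[\OO_K^{\times,>}]$ and $x \in I(1-u_\theta)^{-1}$, which is the desired form. The main obstacle is bookkeeping rather than conceptual: one must carefully match the entries of $h\gamma h^{-1}$ to the explicit action of $[v,y,t] \in \mathcal{G}_\theta^{(+)}$ and repeatedly invoke the injectivity and real-valuedness of $\sigma_1, \sigma_2$ to translate real equalities into statements in $K$.
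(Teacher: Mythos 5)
Your proof is correct and follows essentially the same route as the paper: conjugate the generators $g_0,g_1,g_2$ (and $g_3$) by $h$, match the resulting affine maps against elements of $\Gamma$, and translate the real identities into statements in $K$, concluding that the linear part of $h$ is multiplication by a unit preserving $I$ and that the translation part lies in $I(1-u_\theta)^{-1}$. The only cosmetic difference is that the paper first passes to the quotient $\Aff(\HH\times\C)/T_\C$ to suppress the $z$-translations before conjugating, whereas you work directly with the affine matrices and use $g_3$ to pin down the $z$-scaling factor $N=\pm 1$; the substance is identical.
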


\begin{proof}
There is a surjective morphism from $\Aff(\HH \times \C)$ to the group
$$G=
\left\{
\left[
\begin{pmatrix}
\mu_1 & 0\\
0 & \mu_2
\end{pmatrix},
\begin{pmatrix}
l_1 \\
l_2
\end{pmatrix}
\right] \in \text{GL}(2, \C) \ltimes \C^2\vert \mu_1\in \R_>;\  \mu_2\in \C^*;\  l_1, l_2 \in \R \right\}$$
sending
$$\left[
\begin{pmatrix}
\mu & 0\\
\lambda & \nu
\end{pmatrix},
\begin{pmatrix}
l \\
s
\end{pmatrix}
\right] \to
\left[
\begin{pmatrix}
\mu & 0\\
0 & \frac{\nu}{\mu}
\end{pmatrix},
\begin{pmatrix}
l \\
\frac{\lambda}{\mu}
\end{pmatrix}
\right]
$$
The kernel is $T_\C$, so $\Aff(\HH \times \C)/T_\C$ is isomorphic to $G$. The group $\Gamma/(\Gamma \cap T_\C)$ is isomorphic to $\langle u_\theta \rangle \ltimes I$. Let $\hat{g_i}$ denote the class of the generator $g_i$ in the quotient $\Gamma / (\Gamma \cap T_\C)$. Now let $h \in \Nc(\Gamma)$ with 
$$\hat{h} =
\left[
\begin{pmatrix}
\mu_1 & 0\\
0 & \mu_2
\end{pmatrix},
\begin{pmatrix}
l_1 \\
l_2
\end{pmatrix}
\right]
$$
the image of $h$ in $G$. Then $\hat{h}$ is contained in the normalizer of $\Gamma/(\Gamma \cap T_\C)$ in $G$. Which, in particular, means that $\hat{h}\hat{g_0}\hat{h}^{-1} \in \Gamma/(\Gamma\cap T_\C)$, so 
$$\left[
\begin{pmatrix}
\sigma_1(u_\theta) & 0\\
0 & \sigma_2(u_\theta)
\end{pmatrix},
\begin{pmatrix}
l_1\sigma_1(1-u_\theta) \\
l_2\sigma_2(1-u_\theta)
\end{pmatrix}
\right] \in \langle u_\theta \rangle \ltimes I
$$
Thus there exists $l \in K$ such that $\sigma_i(l)=l_i$, $i \in \{1,2\}$. In fact, $l$ is an element in $I(1-u_\theta)^{-1}$.
Similarly, we have $\hat{h}\hat{g_i}\hat{h}^{-1} \in \Gamma/(\Gamma\cap T_\C)$, so
$$
\left[
I_2,
\begin{pmatrix}
\mu_1\sigma_1(x_i) \\
\mu_2\sigma_2(x_i)
\end{pmatrix}
\right] \in \langle u_\theta\rangle \ltimes I,\text{ for }i\in\{1,2\}$$
hence, once again, there exists some element $\mu \in K$ such that $\sigma_i(\mu)=\mu_i$. Moreover, $\mu$ is a unit in $\OO_K$, $\sigma_1(\mu)>0$ and $I$ a $\Z[\mu]$-fractional ideal, thus $\mu \in {}^I[\OO_K^{\times,>}]$. We have shown that $h = [\mu, l, s]\in \mathcal{G}_{\theta}^{(+)}$, where $\mu \in {}^I[\OO_K^{\times,>}]$, $l \in I(1-u_\theta)^{-1}$ and $s$ is some complex number.
\end{proof}

\begin{remark}
Proposition \ref{bound} gives an upper bound on the cardinality of the group $Q$, namely 
$$\#Q\leq [ {}^I[\OO_K^{\times,>}]: \langle u_\theta\rangle]\cdot[I(1-u_\theta)^{-1}: I] $$
\end{remark}

\noindent
The index of $I$ in $I(1-u_\theta)^{-1}$ is $\Norm(1-u_\theta)$. If we denote by $u$ the generator of ${}^I[\OO_K^{\times,>}]$ (chosen such that $\sigma_1(u)>1$), then, we can rewrite the inequality above as
\begin{align}\label{upb}
\#Q \leq \frac{\text{log}(\sigma_1(u_\theta))}{\text{log}(\sigma_1(u))}\Norm(1-u_\theta)
\end{align}

\begin{remark}
The inclusion of proposition \ref{bound} may be strict, as in example \ref{ex1} of the final section of the chapter. In contrast, example \ref{ex4} shows that equality between $Q$ and $\mathcal{H}_{\theta, [I]}$ may be achieved. 
\end{remark}

\begin{remark}
The inclusion ${}^I[\OO_K^{\times,>}]\leq \OO_K^{\times, >}$ may be strict. To see this, take for example, $K$ to be a real quadratic field with the property that $\OO_K = \Z[\eta]$, where $\eta$ is the fundamental unit of $K$. This happens, for instance, when the discriminant $D= 2,3,5,10,13 ...$ Then let $u_\theta=\eta^n$ for some positive integer $n \geq 2$. Remark 3.3 of Conrad's notes \cite{Con} shows that if $I$ is an invertible fractional ideal of the nonmaximal order $\Z[u_\theta]\subset\OO_K$, then it cannot be an ideal of the whole ring of integers $\OO_K$. Thus, for such a choice of $I$, the fundamental unit will not be contained in ${}^I[\OO_K^{\times,>}]$.
\end{remark}

\noindent
The following proposition gives the two conditions for an element of $\mathcal{H}_{\theta, [I]}$ to be contained in $Q$.

\begin{proposition}\label{equ}
Let $h \in \mathcal{H}_{\theta, [I]}$, where $h$ is represented by $[v,y]$  with  $v \in {}^I[\OO_K^{\times,>}]$ and $y\in I(1-u_\theta)^{-1}$. Let $(1-u_\theta)y= ax_1 + bx_2$ and let $M = (m_{ij}) \in \text{GL}(2,\Z)$ be the matrix associated to the action of $v$ on $I$, with respect to the $\Z$-basis $(x_1,x_2)$. Then, $h \in Q$ if and only if the following two conditions are satisfied:
\begin{flalign*}
1)&\ \ \ (v-1)e+y - \frac{m_{21}m_{22}vx_1-m_{12}m_{11}vx_2}{2} \in Ir^{-1}&&\\
2)&\ \ \ (\Norm(v)-1)t + \chi((u_\theta-1)y, e-\frac{y}{2}) +\frac{\chi(ax_1,bx_2)}{2} \in \frac{\chi(I,I)}{r}&&
\end{flalign*}
\end{proposition}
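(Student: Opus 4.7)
My plan is to translate the membership condition $h \in Q$ directly into algebraic conditions on the generators of $\Gamma$, by computing the relevant conjugations in $\mathcal{G}_\theta^{(+)}$. By Proposition \ref{bound}, $Q$ embeds in $\mathcal{H}_{\theta,[I]}$, so $h = [v,y] \in \mathcal{H}_{\theta,[I]}$ belongs to $Q$ if and only if some representative $[v,y,s] \in \mathcal{G}_\theta^{(+)}$ normalizes $\Gamma$. A short check based on the multiplication law shows that each of $g_0, g_1, g_2, g_3$ has unit component of norm $1$, so $T_\C$ commutes with every generator and therefore $T_\C \subseteq \Nc(\Gamma)$. Since $[v,y,s]$ and $[v,y,0]$ differ by left multiplication by an element of $T_\C$, the condition is independent of $s$; we may therefore fix the lift with $s=0$, and $h \in Q$ if and only if $[v,y,0]\,g_i\,[v,y,0]^{-1} \in \Gamma$ for each $i \in \{0,1,2,3\}$.

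I would then compute these four conjugates in closed form, using the multiplication law for $\mathcal{G}_\theta^{(+)}$ together with the three identities for $\chi$ listed before Proposition \ref{emb1}. Conjugation of the central generator $g_3$ yields $g_3^{\Norm(v)} \in \Gamma$ and imposes no condition. For $i \in \{1,2\}$ a direct expansion gives $[v,y,0]\,g_i\,[v,y,0]^{-1} = [1,\,vx_i,\,\Norm(v)\chi(x_i,e) + \chi(vx_i,y)]$, while for $g_0$ one obtains $[v,y,0]\,g_0\,[v,y,0]^{-1} = [u_\theta,\,(1-u_\theta)y,\,\Norm(v)t + \tfrac{1}{2}\chi(y, u_\theta y)]$. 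The unit components ($1$ or $u_\theta$) match those of the corresponding generators of $\Gamma$, and the second coordinates $vx_i$ and $(1-u_\theta)y$ automatically lie in $I$, since $vI = I$ and $y \in I(1-u_\theta)^{-1}$. All genuine constraints therefore fall on the third coordinate.

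To extract condition 1, I would compare the third coordinate of $[v,y,0]\,g_i\,[v,y,0]^{-1}$ with that of $g_1^{m_{i1}}g_2^{m_{i2}}g_3^{k_i}$ (the element of $\Gamma$ with matching first two coordinates, depending on a free integer $k_i$). Decomposing $\varphi(v)[(v-1)e+y]$ in the basis $(x_1,x_2)$ and imposing both conditions $i=1,2$ together collapses them into the single relation $\varphi(v)[(v-1)e+y] - (m_{21}m_{22}/2)x_1 + (m_{11}m_{12}/2)x_2 \in Ir^{-1}$; multiplying through by $v$ and using $v\varphi(v)=\Norm(v)$ together with $v\cdot Ir^{-1} = Ir^{-1}$ then produces condition 1. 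For condition 2, write $(1-u_\theta)y = ax_1+bx_2$: the element of $\Gamma$ with the matching first two coordinates may be written as $g_1^a g_2^b g_3^{-p_3}\,g_0$, and comparing third coordinates using the identity $\chi(y, u_\theta y) = \chi((1-u_\theta)y, y) = \chi(ax_1+bx_2,y)$ rewrites the resulting difference in precisely the form of condition 2.

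The main obstacle will be the careful bookkeeping of the Baker--Campbell--Hausdorff correction terms $-\chi(\cdot,\cdot)/2$ that appear in every product and inverse, and the systematic application of the three properties of $\chi$ to collapse the resulting expressions into their compact final forms. Conceptually the argument is straightforward once the lift is normalized to $s=0$; the heart of the proof is a patient computation.
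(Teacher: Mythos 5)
Your proposal follows essentially the same route as the paper: reduce membership in $Q$ to conjugating the four generators by a lift $[v,y,s]$ (the paper keeps $s$ arbitrary and observes the conditions do not involve it, which is equivalent to your normalization $s=0$), compute the conjugates in $\mathcal{G}_\theta^{(+)}$ --- your closed forms agree with the paper's, e.g.\ $\chi(vx_i,ve+y)=\Norm(v)\chi(x_i,e)+\chi(vx_i,y)$ --- and compare third coordinates against $g_1^{m_{i1}}g_2^{m_{i2}}g_3^{k_i}$ and $g_1^{a}g_2^{b}g_3^{k_0}g_0$ to extract conditions 1) and 2). The one step to make explicit in the sufficiency direction is that containment of the conjugated generators in $\Gamma$ actually yields equality $[v,y,s]\Gamma[v,y,s]^{-1}=\Gamma$, because these conjugates (using that $M\in\mathrm{GL}(2,\Z)$ and that $g_3$ maps to $g_3^{\pm1}$) again generate $\Gamma$ --- a short verification the paper records at the end of its proof.
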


\begin{proof}
Let $s \in \C$, the condition that $[v,y]$ represents an element in $Q$ is equivalent to $[v,y,s] \in \mathcal{N}(\Gamma)$. First, assume that $[v,y,s]$ is in the normalizer. In particular, we have that  $[v,y,s]g_i[v,y,s]^{-1}\in \Gamma$, for $i \in \{0,1,2\}$. Then 
$$[v,y,s]g_0[v,y,s]^{-1}g_0^{-1}=\left[1, (1-u_\theta)y, (\Norm(v)-1)t +\frac{\chi(y,u_\theta y)}{2}\right] \in \langle g_1,g_2,g_3 \rangle$$
Since $(1-u_\theta)y = ax_1 + bx_2$, we compute
\begin{align*}
g_1^ag_2^b &= \left[1, ax_1+bx_2, \chi(ax_1+bx_2, e)-\frac{\chi(ax_1,bx_2)}{2}\right]\\
&=\left[1,(1-u_\theta)y, \chi((1-u_\theta)y,e)-\frac{\chi(ax_1,bx_2)}{2}\right]
\end{align*}
Hence $[v,y,s]g_0[v,y,s]^{-1}g_0^{-1}(g_1^ag_2^b)^{-1} =$
\begin{align*}
=\left[1,0,(\Norm(v)-1)t + \chi((u_\theta-1)y, e-\frac{y}{2}) +\frac{\chi(ax_1,bx_2)}{2}\right]
\end{align*}
which is central and thus contained in $\langle g_3 \rangle$. This implies condition 2) of the proposition. For $i \in \{1,2\}$, we have
\begin{align*}
[v,y,s]g_i[v,y,s]^{-1}&=\left[1,vx_i,\Norm(v)\chi(x_i,e)-\frac{\chi(y,vx_i) + \chi(y+vx_i,-y)}{2}\right]\\
&=\left[1,vx_i, \chi(vx_i,ve+y)\right] \in \langle g_1,g_2,g_3 \rangle
\end{align*}
Since $vx_i = m_{1i}x_1+m_{2i}x_2$, we have
\begin{align*}
[v,y,s]g_i[v,y,s]^{-1}(g_1^{m_{i1}}g_2^{m_{i2}})^{-1}&= \left[1,0,\chi(vx_i, (v-1)e+y)+\frac{\chi(m_{i1}x_1,m_{i2}x_2)}{2}\right]
\end{align*}
which is again central and thus contained in $\langle g_3 \rangle$. It follows that 
$$\chi(vx_1, (v-1)e+y + \frac{m_{11}m_{12}vx_2}{2}) \in \frac{\chi(I,I)}{r}$$
and
$$\chi(vx_2,(v-1)e+y-\frac{m_{21}m_{22}vx_1}{2})\in \frac{\chi(I,I)}{r}$$
and so condition 1) is satisfied. Conversely, we have to show that the images under conjugation by $[v,y,s]$ of $g_0$, $g_1$, $g_2$ and $g_3$ generate $\Gamma$. Condition 1) implies that $g_0$ is sent to $g_3^{p_0}g_1^ag_2^bg_0$ and condition 2) implies that $g_i$ is sent to $g_3^{p_i}g_1^{m_{i1}}g_2^{m_{i2}}$ (for $i\in \{1,2\}$), where $p_j$ are integers. Depending on the norm of $v$, $g_3$ is sent to either $g_3$ or $g_3^{-1}$. It is straightforward to verify that these conjugates generate $\Gamma$. This completes the proof.
\end{proof}

\begin{remark}
If the parameter $r$ is even, then the two conditions of proposition \ref{equ} take the simpler form: 
\begin{flalign*}
1)&\ \ \ (v-1)e+y \in Ir^{-1}&&\\
2)&\ \ \ (\Norm(v)-1)t + \chi((u_\theta-1)y, e-\frac{y}{2}) \in \frac{\chi(I,I)}{r}&&
\end{flalign*}
\end{remark}

\noindent
Since the case where $r$ is odd is more complicated, it is, in practice, easier to compute the automorphism group of the surface $S^{(+)}_{\theta,2r,x_1,x_2,e;t}$, which is a $[2:1]$ quotient of $S^{(+)}_{\theta,r,x_1,x_2,e;t}$, and then decide which automorphisms lift to the covering. We will now describe the algorithm for finding the group $Q$ when one is given the parameters $(\theta,r,x_1,x_2,e;t)$ (as in definition \ref{gr}).\par

\noindent
{\bf Step 1}: Determine the fundamental unit $\eta$ of $\OO_K$ (such that $\sigma_1(\eta)>1$). Let $M_1$ be the matrix with rational entries that represents the action of $\eta$ on $I$ with respect to the $\Z$-basis $(x_1,x_2)$. That is:
$$M_1
\begin{pmatrix}
x_1 \\
x_2
\end{pmatrix}= \eta \begin{pmatrix}
x_1 \\
x_2
\end{pmatrix}$$

\noindent
{\bf Step 2}: Let $j$ be the smallest positive integer such that $M_1^j$ is an integer matrix. We know that $j\leq [\langle u_\theta\rangle:\langle\eta\rangle]$, so its value can be determined by initially setting $j=1$ and incrementing it until the integrality condition is satisfied.  Then $u \coloneq \eta^j$ is the generator of ${}^I[\OO_K^{\times,>}]$.\par

\noindent
{\bf Step 3}: Find a complete system of representatives $L$ for the quotient $\frac{I(1-u_\theta)^{-1}}{I}$.\par

\noindent
{\bf Step 4}:  Let $n \coloneq [\langle u_\theta\rangle:\langle u\rangle]$. Denote by $\widetilde{v}$ the class of $v \in {}^I[\OO_K^{\times,>}]$ in $\frac{{}^I[\OO_K^{\times,>}]}{\langle u_\theta\rangle}$ and by $\widehat{y}$ the class of $y\in I(1-u_\theta)^{-1}$ in $\frac{I(1-u_\theta)^{-1}}{I}$. We now have that 
$$\mathcal{H}_{\theta,[I]} = \left\{\widetilde{u}^i\vert i\in\left\{0\dots n-1\right\}\right\} \ltimes \left\{\widehat{y}\vert y \in L\right\}$$
and by proposition (\ref{equ}),
$$Q=\left\{[\widetilde{v},\widehat{y}]\in \mathcal{H}_{\theta,[I]}\ \vert\  [v,y]\text{ satisfies conditions 1) and 2)}\right\}$$
Since $\mathcal{H}_{\theta,[I]}$ is finite, we may check its elements one by one against the two conditions.

\subsection{Computed examples}\label{expls} We will now use the method developed in the previous section to compute the group $Q$ in some particular cases. Before we begin, we will make two remarks that will help with the ease of notation.

\begin{remark}
Let $K$ be a real quadratic number field and $L_1 = {}_\Z\langle x_1, x_2\rangle$ and $L_2= {}_\Z\langle y_1,y_2\rangle$ two lattices in $K$. We will denote by $[L_1 : L_2]$ the rational number $\vert\frac{\chi(y_1,y_2)}{\chi(x_1,x_2)}\vert$. If $L_2 \leq L_1$, this is, of course, the usual group index.
\end{remark}

\begin{remark}
When writing the elements of groups $\mathcal{H}_{\theta,[I]}$ and $Q$ explicitly we will often identify an element of the field $K$ with its image through the embedding $\sigma_1$.
\end{remark}

\noindent
We begin with an example where the parameter $r$ is even and both $e \in K$ and $t \in \C$ are set to zero. Under these assumptions, conditions 1) and 2) of proposition \ref{equ} simplify significantly.

\begin{example} \label{ex1}
Let $\theta = 6$, $r = 6$, $e = 0$ and $t = 0$. Let $x_1 = 1$ and $x_2 = \eta $, where $\eta$ is the fundamental unit of $\OO_K$ with $\sigma_1(\eta)>1$. Thus, the fractional ideal is $I = {}_\Z\langle 1, \eta\rangle$ and the Inoue surface associated to these parameters is $X = S^{(+)}_{6, 6, 1, u, 0; 0}$.
\end{example}

\noindent
{\bf Step 1}: $\sigma_1(u_\theta) = 3 + 2\sqrt{2}$; $\sigma_1(\eta) = 1+\sqrt{2}$. The minimal polynomial of $\eta$ is $X^2-2X-1$. Thus 
$$
\begin{pmatrix}
0 & 1\\
1 & 2
\end{pmatrix}
\begin{pmatrix}
1 \\
\eta
\end{pmatrix}=
\eta\begin{pmatrix}
1 \\
\eta
\end{pmatrix}
$$
and so
$$
M_1=\begin{pmatrix}
0 & 1\\
1 & 2
\end{pmatrix}
$$

\noindent
{\bf Step 2}: The matrix $M_1$ is integral, so $j=1$ and $u=\eta$ is the generator of ${}^I[\OO_K^{\times,>}]$.\par

\noindent
{\bf Step 3}: $I=\OO_K\simeq \Z[\sqrt{2}]$ and $\Z[u_\theta]\simeq\Z[2\sqrt{2}]$. Then 
$$I(1-u_\theta) \simeq \Z[\sqrt{2}](2+2\sqrt{2}) = {}_\Z\langle 2,2\sqrt{2}\rangle$$
and thus a complete system of representatives for $\frac{I(1-u_\theta)^{-1}}{I}$ is
$$L = \left\{0,\frac{1}{2},\frac{\sqrt{2}}{2},\frac{1+\sqrt{2}}{2}\right\}$$

\noindent
{\bf Step 4}: $n=2$ and so
$$\mathcal{H}_{\theta,[I]}= \left\{\widetilde{1}, \widetilde{1+\sqrt{2}}\right\}\ltimes \left\{\widehat{0},\widehat{\frac{1}{2}},\widehat{\frac{\sqrt{2}}{2}},\widehat{\frac{1+\sqrt{2}}{2}}\right\}$$
Let $[v,y] \in {}^I[\OO_K^{\times,>}] \ltimes I(1-u_\theta)^{-1}$ representing an element in $Q$. In this example, the two conditions of proposition \ref{equ} become
\begin{flalign*}
1)&\ \ \ y \in I6^{-1}&&\\
2)&\ \ \ \frac{1}{2}\chi(u_\theta y,y) \in \frac{\chi(I,I)}{6}&&
\end{flalign*}
Both conditions are independent of $v$, so $v$ may take any value in ${}^I[\OO_K^{\times,>}]$. The second condition is equivalent to $\Norm(y) \in \frac{2}{6}[\Z[u_\theta]: I]\Z$. Since $[\Z[u_\theta]: I]=\frac{1}{2}$, we get that $\Norm(y)\in \frac{1}{6}\Z$. This implies that $y$ is congruent to either $0$ or $\frac{\sqrt{2}}{2}$ mod $I$ and therefore
$$Q = \left\{\widetilde{1},\widetilde{1+\sqrt{2}}\right\}\ltimes\left\{\widehat{0},\widehat{\frac{\sqrt{2}}{2}}\right\}$$
Since the action of $u = 1+\sqrt{2}$ on $\{\widehat{0},\widehat{\sqrt{2}/2}\}$ is trivial, we conclude that, in this case, the group $Q$ is isomorphic to $(\Z/2\Z)\times (\Z/2\Z)$. In this case the inclusion given by proposition \ref{bound} is strict.\par

\noindent
For the following two examples we will consider situations where $r$ is even and $[I]=1$ in the ideal class monoid $\text{Cl}(\Z[u_\theta])$. Before we address the examples, we have the following proposition.

\begin{proposition} \label{excep}
Let $\theta \geq 3$ and let $I$ be a principal fractional ideal of the order $\Z[u_\theta]$. The only case where ${}^I[\OO_K^{\times,>}]$ is strictly larger than $\langle u_\theta \rangle$ happens when $\theta = 3$.
\end{proposition}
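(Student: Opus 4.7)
Since $I$ is a principal fractional ideal of $\Z[u_\theta]$, say $I = x\Z[u_\theta]$, the condition $uI = I$ for $u \in \OO_K^{\times,>}$ is equivalent to $u, u^{-1} \in \Z[u_\theta]$, i.e., $u \in \Z[u_\theta]^\times$. Thus ${}^I[\OO_K^{\times,>}] = \Z[u_\theta]^\times \cap \OO_K^{\times,>}$, and I would reformulate the claim as: for $\theta \geq 4$, $u_\theta$ is the fundamental unit of this intersection. The exceptional case $\theta = 3$ can be disposed of directly: in $K = \Q(\sqrt 5)$ one has $\OO_K = \Z[u_3]$, and the fundamental unit $\eta = \tfrac{1+\sqrt 5}{2}$ satisfies $\eta^2 = u_3$, so ${}^I[\OO_K^{\times,>}] = \langle \eta \rangle \supsetneq \langle u_3 \rangle$.

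The core task is to rule out, for $\theta \geq 4$, any $v \in \Z[u_\theta]^\times \cap \OO_K^{\times,>}$ with $1 < \sigma_1(v) < \sigma_1(u_\theta)$. Writing $v = a + bu_\theta$ with $a, b \in \Z$, setting $T = \Trace(v) = 2a + b\theta$ and $N = \Norm(v) \in \{\pm 1\}$, and squaring the identity $v - \bar v = b(u_\theta - \bar u_\theta) = b\sqrt{\theta^2-4}$, I would obtain the key Pell-type relation
$$T^2 - 4N = b^2(\theta^2 - 4).$$
A short sign analysis based on $\sigma_1(v) > 1$ and $\sigma_1(v)\sigma_2(v) = N$ shows that $b > 0$ and $T > 0$ in both norm cases.

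The argument then splits. For $N = +1$, the Pell relation immediately yields $T^2 \geq \theta^2$, hence $T \geq \theta$, and $\sigma_1(v) = \tfrac{T + b\sqrt{\theta^2-4}}{2} \geq \sigma_1(u_\theta)$, ruling out intermediate $v$. For $N = -1$, the relation is $T^2 + 4 = b^2(\theta^2 - 4)$. The case $b = 1$ collapses to $(\theta - T)(\theta + T) = 8$, whose only positive integer solution with $\theta \geq 3$ is $(\theta, T) = (3, 1)$, producing exactly the exceptional unit. For $b \geq 2$, the condition $\sigma_1(v) < \sigma_1(u_\theta)$ rewrites as $T < \theta - (b-1)\sqrt{\theta^2-4}$; using the elementary inequality $\sqrt{\theta^2-4} > \theta - 1$ (valid for $\theta \geq 3$), one obtains $T < (2-b)\theta + (b-1) \leq 1$ strictly, so $T \leq 0$, contradicting $T \geq 1$.

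The main obstacle I anticipate is handling the range $N = -1$, $b \geq 2$ uniformly in $b$, rather than having to treat each value separately. The resolution is that the elementary bound $\sqrt{\theta^2-4} > \theta - 1$ is already tight enough at $b = 2$ (the worst case) and only becomes stronger as $b$ grows; this avoids any appeal to conductor or class-group data of the order $\Z[u_\theta]$.
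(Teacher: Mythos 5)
Your proof is correct. The opening reduction---that for principal $I$ one has ${}^I[\OO_K^{\times,>}]=\Z[u_\theta]^\times\cap\OO_K^{\times,>}$---is the same observation the paper makes, though the paper phrases it as: $\Z[u_\theta]$ is an integral $\Z[u]$-ideal containing $1$, hence equals $\Z[u]$, where $u$ is the generator of ${}^I[\OO_K^{\times,>}]$. From there the two arguments genuinely diverge. The paper exploits the resulting equality of orders $\Z[u]=\Z[u_\theta]$ to conclude that $u$ must be of the form $\pm(u_\theta+b)$ with $b\in\Z$ (its $u_\theta$-coefficient is forced to be $\pm1$), so the unit condition becomes the single quadratic $\Norm(u_\theta+b)=1+b^2+b\theta=\pm1$; the value $-1$ gives $b(b+\theta)=-2$ and hence $\theta=3$ at once. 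You instead take an arbitrary intermediate unit $v=a+bu_\theta$ with $1<\sigma_1(v)<\sigma_1(u_\theta)$ and eliminate it through the trace--norm relation $T^2-4N=b^2(\theta^2-4)$ combined with the size estimate $\sqrt{\theta^2-4}>\theta-1$; this requires the extra case split on $b$ but every step checks out (the sign analysis giving $b\geq 1$ and $T\geq 1$, the bound $T\geq\theta$ when $N=1$, the factorization $(\theta-T)(\theta+T)=8$ when $N=-1$, $b=1$, and the inequality $T<(2-b)\theta+(b-1)\leq 1$ when $b\geq 2$ are all valid). The paper's structural step buys a shorter, one-parameter computation; your Pell-type analysis is slightly longer but makes explicit the quantitative fact that no unit of the order can lie strictly between $1$ and $u_\theta$ under $\sigma_1$ unless $\theta=3$, and it does not need the observation that $u$ and $u_\theta$ generate the same order.
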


\begin{proof}
The fact that $I$ is principal means that, up to ideal class, we may assume that $I = \Z[u_\theta]$. Let $u$ denote the generator of ${}^I[\OO_K^{\times,>}]$ (with $\sigma_1(u)>1$), then $u_\theta = u^n$, for some integer $n$, so $\Z[u_\theta]\leq \Z[u]$. But this means that $\Z[u_\theta]$ is an integral $\Z[u]$-ideal and, since $1 \in \Z[u_\theta]$, the only possibility is $\Z[u]=\Z[u_\theta]$. We thus have that $\pm u = u_\theta + b$ for some $b \in \Z$. The norm of $u_\theta + b$ is $\Norm(u_\theta + b)= (u_\theta + b)(\varphi(u_\theta)+b)= 1 + b^2 + b\theta$. If $1+b^2+b\theta = 1$, then $b=0$ or $b= -\theta$, which implies that $u \in \{\pm u_\theta, \pm \varphi(u_\theta)\}$. We know that $u \in \OO_K^{\times, >}$, so $u$ can only be $u_\theta$ or $\varphi(u_\theta)= \frac{1}{u_\theta}$. So, in this case, ${}^I[\OO_K^{\times, >}]= \langle u_\theta\rangle$. If $1+b^2+b\theta=-1$, then $b(b+\theta)= -2$, so $\theta = 3$ and $b$ is either $-1$ or $-2$. We have that $\sigma_1(u_\theta)=\frac{3+\sqrt{5}}{2}$ and since $\sigma_1(u)>1$, we see that $u$ is the exceptional unit $u_\theta -1$, so $\sigma_1(u)=\frac{1+\sqrt{5}}{2}$. We have shown that this situation, where $\theta = 3$, is the only case where the inclusion $\langle u_\theta \rangle \leq {}^I[\OO_K^{\times, >}]$ is strict. \end{proof}

\begin{example}\label{ex2} Let $\theta = 4$, $r = 6$, $x_1 = 1$ and $ x_2 = u_\theta$ (so $I = \Z[u_\theta]$). Let $t=0$ and $e= \frac{1}{6(1-u_\theta)}$.
\end{example}
\noindent
It is not difficult to check that the group defined by these parameters is in standard form. Applying proposition \ref{excep} to this example, we expect the generator of ${}^I[\OO_K^{\times, >}]$ to be $u_\theta$.\par

\noindent
{\bf Step 1}: $\sigma_1(u_\theta)=2+\sqrt{3}$. The fundamental unit $\eta$ is equal to $u_\theta$ and 
$$M_1=
\begin{pmatrix}
0 & 1\\
-1 & 4
\end{pmatrix}$$

\noindent
{\bf Step 2}: The matrix $M_1$ is integral, so $u=\eta=u_\theta$ is the generator of ${}^I[\OO_K^{\times, >}]$.\par

\noindent
{\bf Step 3}: $I = \Z[u_\theta]\simeq \Z[\sqrt{3}]$. Then 
$$I(1-u_\theta)\simeq \Z[\sqrt{3}](1+\sqrt{3})={}_\Z\langle1+\sqrt{3},2\rangle$$
and thus a complete system of representatives for $\frac{I(1-u_\theta)^{-1}}{I}$ is
$$L = \left\{0, \frac{1+\sqrt{3}}{2}\right\}$$

\noindent
{\bf Step 4}: $n=1$ and so 
$$\mathcal{H}_{\theta,[I]}= \left\{\widetilde{1} \right\}\ltimes \left\{\widehat{0}, \widehat{\frac{1+\sqrt{3}}{2}}\right\}$$
Let $[v,y] \in {}^I[\OO_K^{\times,>}] \ltimes I(1-u_\theta)^{-1}$ representing an element in $Q$. We have that $v\in \langle u_\theta \rangle$, and thus $(v-1)e \in Ir^{-1}$, so the two conditions of proposition \ref{equ} are
\begin{flalign*}
1)&\ \ \ y \in I6^{-1}&&\\
2)&\ \ \ \chi((u_\theta -1)y, \frac{1}{6(1-u_\theta)}-\frac{y}{2}) \in \frac{\chi(I,I)}{6}&&
\end{flalign*}
We only have to check if $y = \frac{u_\theta-1}{2}$ ($\sigma_1(\frac{u_\theta-1}{2})=\frac{1+\sqrt{3}}{2}$) satisfies condition 2), which it does, as one can easily verify. We get that
$$Q= \left\{\widetilde{1} \right\}\ltimes \left\{\widehat{0}, \widehat{\frac{1+\sqrt{3}}{2}}\right\}$$
so in this case, the group $Q$ is isomorphic to $\Z/2\Z$.

\begin{example}\label{ex3}
As in the previous example, let $\theta =  4$, $r = 6$, $x_1 = 1$, $x_2 = u_\theta$ and $t = 0$, but for this example take $e = 0$.
\end{example}

\noindent 
Up until the last step everything is the same as in the previous example \ref{ex2}, so again we have
$$\mathcal{H}_{\theta,[I]}= \left\{\widetilde{1} \right\}\ltimes \left\{\widehat{0}, \widehat{\frac{1+\sqrt{3}}{2}}\right\}$$
Let $[v,y] \in {}^I[\OO_K^{\times,>}] \ltimes I(1-u_\theta)^{-1}$ representing an element in $Q$. This time, the two conditions of proposition \ref{equ} are
\begin{flalign*}
1)&\ \ \ y \in I6^{-1}&&\\
2)&\ \ \ \frac{1}{2}\chi(u_\theta y,y) \in \frac{\chi(I,I)}{6}&&
\end{flalign*}
Condition 2) is equivalent to $\Norm(y)\in \frac{1}{3}[\Z[u_\theta]:I]\Z=\frac{1}{3}\Z$. The norm of $y=\frac{1+\sqrt{3}}{2}$ is $-\frac{1}{2}$, so in this case, the group $Q$ is trivial.\par

\noindent
The last example of the section is one where the group $Q$ is noncommutative.

\begin{example}\label{ex4}
Let $\theta = 7$, $r = 10$, $e=0$ and $t=0$. Let $x_1=1$ and $x_2=\eta$, where $\eta$ is the fundamental unit of $\OO_K$ with $\sigma_1(\eta)>1$. The fractional ideal is $I = \Z[\eta]$.
\end{example}

\noindent
{\bf Step 1}: $\sigma_1(u_\theta) = \frac{7+3\sqrt{5}}{2}$; $\sigma_1(\eta) =\frac{1+\sqrt{5}}{2}$. The minimal polynomial of $\eta$ is $X^2-X-1$. Thus 
$$
\begin{pmatrix}
0 & 1\\
1 & 1
\end{pmatrix}
\begin{pmatrix}
1 \\
\eta
\end{pmatrix}=
\eta\begin{pmatrix}
1 \\
\eta
\end{pmatrix}
$$
and so
$$
M_1=\begin{pmatrix}
0 & 1\\
1 & 1
\end{pmatrix}
$$

\noindent
{\bf Step 2}: The matrix $M_1$ is integral, so $j=1$ and $u=\eta$ is the generator of ${}^I[\OO_K^{\times,>}]$.\par

\noindent
{\bf Step 3}: $I=\OO_K=\Z[u]$ and $\Z[u_\theta]=\Z[u^4]$. Then
$$I(1-u_\theta)=\Z[u](3u+1)={}_\Z\langle u+2,5\rangle$$
and thus a complete system of representatives for $\frac{I(1-u_\theta)^{-1}}{I}$ is
$$L = \left\{k\frac{1+3u}{5}\ \vert\ k\in \{0\dots 4\}\right\}$$

\noindent
{\bf Step 4}: $n=4$ and so 
$$\mathcal{H}_{\theta,[I]}=\{\widetilde{u}^i\ \vert\ i\in\{0\dots 3\}\}\ltimes \left\{k\widehat{\frac{1+3u}{5}}\ \vert\ k\in \{0\dots 4\}\right\}$$
Let $[v,y] \in {}^I[\OO_K^{\times,>}] \ltimes I(1-u_\theta)^{-1}$ representing an element in $Q$. The two conditions of proposition \ref{equ} are
\begin{flalign*}
1)&\ \ \ y \in I10^{-1}&&\\
2)&\ \ \ \frac{1}{2}\chi(u_\theta y,y) \in \frac{\chi(I,I)}{10}&&
\end{flalign*}
One then checks that all the elements of $\mathcal{H}_{\theta,[I]}$ satisfy the conditions. Therefore
$$Q=\{\widetilde{u}^i\ \vert\ i\in\{0\dots 3\}\}\ltimes \left\{k\widehat{\frac{1+3u}{5}}\ \vert\ k\in \{0\dots 4\}\right\}$$
so $Q$ is isomorphic to $(\Z/4\Z) \ltimes (\Z/5\Z)$, where the action of the generator of $(\Z/4\Z)$ on $(\Z/5\Z)$ is multiplication by $3$.

\section{Surfaces of type $S^{(-)}$} \label{s-}
In this chapter we will describe the automorphism groups of Inoue surfaces of type $S^{(-)}$. The results are similar to those of the previous chapter, so we will not prove them, instead we will only point the differences. We begin with the analogous construction.

Let $\theta$ be an integer, $\theta \geq 1$ and $g_\theta$ the polynomial 
$$g_{\theta}(X)=X^2 - \theta X - 1$$
Let $K = \Q[X]/(g_\theta)$ and $u_\theta:= \hat{X} \in K$. The group $\mathcal{G}^{(-)}_\theta$ is defined in the exact same way as $\mathcal{G}_{\theta}^{(+)}$ was defined in \ref{Incl}, also its action on $\HH \times \C$ remains the same. 
\begin{definition}
Similar to definition \ref{gr}, we consider the subgroup $\Gamma^{(-)}_{\theta,r,x_1,x_1,e} \leq \mathcal{G}^{(-)}_\theta$ generated by
$$g_0=[u_\theta,0,0];\ \ g_i=[1,x_i,\chi(x_i, e)], i \in \{1,2\};\ \ g_3=\left[1,0,-\frac{\chi(x_1,x_2)}{r}\right]$$
We say that $\Gamma^{(-)}_{\theta,r,x_1,x_1,e}$ is in standard form if $\langle g_3 \rangle  = \Gamma_{\theta, r, x_1,x_2, e}^{(-)} \cap T_{\C}$ (see \ref{trans}).
\end{definition}

\begin{proposition}
The quotient of $\HH \times \C$ by the left action of the group $\Gamma^{(-)}_{\theta,r,x_1,x_1,e}$ is an Inoue surface of type $S^{(-)}$. Moreover, any surface of type $S^{(-)}$ is biholomorphic to one obtained in this way.
\end{proposition}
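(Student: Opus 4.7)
The plan is to imitate the two-part argument already given for the $S^{(+)}$ case (propositions \ref{p1} and \ref{p2}), keeping careful track of the sign changes coming from the fact that in the new field $K = \Q[X]/(X^2 - \theta X - 1)$ one has $\Norm(u_\theta) = -1$. Concretely, under the analog of the embedding $\iota_\theta^{(+)}$, the generator $g_0 = [u_\theta, 0, 0]$ sits in $\mathcal{G}$ with first coordinate $\sigma_1(u_\theta)\Norm(u_\theta) < 0$; in the parametrization of $\mathcal{G}$ as $\R^* \ltimes \mathcal{H}$ this means its sign part is $-1$ while its magnitude is $\sigma_1(u_\theta)$. Consequently $g_0$ acts on $\HH \times \C$ by $(w,z) \mapsto (\sigma_1(u_\theta) w, -z)$, whereas $g_1, g_2, g_3$ retain their formal $S^{(+)}$-type expressions.

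For the first assertion, I would write out the actions of the generators explicitly and set $\alpha = \sigma_1(u_\theta)$, $a_i = \sigma_1(x_i)$, $b_i = \sigma_2(x_i)$, $c_i = \Norm(x_i)/2 + \chi(x_i,e)$, matching Inoue's formulas \ref{Gm}. The identity $\Norm(u_\theta) = -1$ yields $\sigma_2(u_\theta) = -1/\alpha$, and applying both embeddings to the defining relation \ref{AsMat} shows that $(a_1,a_2)$ and $(b_1,b_2)$ are eigenvectors of $N$ for the eigenvalues $\alpha$ and $-1/\alpha$ respectively. The central computation is then the compatibility relation \ref{InCon1}: conjugation by $g_0$ now flips $z$, so the requirement $g_0 g_i g_0^{-1} = g_1^{n_{i1}} g_2^{n_{i2}} g_3^{p_i}$ reads on the $z$-component as $c_i + n_{i1} c_1 + n_{i2} c_2 = -e_i - p_i d$, i.e.\ $(N + I_2)(c_1,c_2)^{\mathsf{T}} = -(e_1,e_2)^{\mathsf{T}} - d(p,q)^{\mathsf{T}}$. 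The existence of the integers $p,q$ is then equivalent to the standard form condition, by the direct analog of lemma \ref{sf} (with $1 - u_\theta$ replaced by $-1 - u_\theta$). Properly discontinuous, fixed-point-free action is inherited automatically from Inoue's original construction via this identification.

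For the converse, I would follow the proof of proposition \ref{p2}. Given $N \in \mathrm{GL}(2,\Z)$ with $\det N = -1$ and eigenvalues $\alpha, -1/\alpha$, set $\theta = \Trace(N)$ so that $\alpha$ is a root of $X^2 - \theta X - 1$, define $K$ and $u_\theta$ accordingly, choose eigenvectors $(a_1,a_2),(b_1,b_2) \in \Q(\alpha)^2$ coordinatewise Galois-conjugate, and recover $x_1, x_2 \in K$ generating a fractional ideal $I$ of $\Z[u_\theta]$. Solving the new compatibility equation for $e$, via the analog of \ref{ee} with $-1 - u_\theta$ in place of $1 - u_\theta$, produces the required data and identifies the given $S^{(-)}$-surface with $\HH \times \C / \Gamma^{(-)}_{\theta,r,x_1,x_2,e}$. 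The main obstacle is purely bookkeeping: the sign flip introduced by $z \mapsto -z$ must be threaded consistently through every conjugation and through the Baker--Campbell--Hausdorff formula in $\mathcal{G}^{(-)}_\theta$, so that $(N+I_2)$ replaces $(N-I_2)$ throughout; once this is handled, every other step is a direct parallel of the $S^{(+)}$ case.
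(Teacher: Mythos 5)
Your proposal is correct and follows exactly the route the paper intends: the paper deliberately omits the proofs in Section \ref{s-}, stating only that the arguments parallel those of Propositions \ref{p1} and \ref{p2}, and your sign bookkeeping ($\Norm(u_\theta)=-1$ forcing $g_0$ to act by $(w,z)\mapsto(\sigma_1(u_\theta)w,-z)$, with $(N+I_2)$ replacing $(N-I_2)$ and $-1-u_\theta$ replacing $1-u_\theta$ in the standard-form condition) is precisely the correct way to carry out that parallel. The only small point worth noting is that for $S^{(-)}$ the element $g_3$ is no longer central (since $g_0 g_3 g_0^{-1}=g_3^{-1}$), which is why the paper restates the standard-form condition as $\langle g_3\rangle=\Gamma\cap T_\C$ rather than in terms of the center; this does not affect your argument.
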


\begin{definition}
We will denote by $S^{(-)}_{\theta,r,x_1,x_2,e}$ the Inoue surface obtained as the quotient of $\HH \times \C$ by the left action of $\Gamma_{\theta,r,x_1,x_2,e}^{(-)}$.   
\end{definition}
\noindent
Let $X$ be the be the Inoue surface $S^{(-)}_{\theta,r,x_1,x_2,e}$ and $\Gamma = \Gamma^{(-)}_{\theta, r, x_1,x_2,e}$. Let $I= {}_\Z\langle x_1,x_2\rangle$, the fractional ideal of $\Z[u_\theta]$. The automorphism group of $X$ is again $\mathcal{N}(\Gamma)/\Gamma$, where $\mathcal{N}(\Gamma)$ is the normailzer of $\Gamma$ in $\Aff(\HH \times \C)$. As in the case discussed in the previous chapter, we have that $\mathcal{N}(\Gamma) \leq \mathcal{G}_{\theta}^{(-)}$ and an element $h \in \mathcal{N}(\Gamma)$ is of the form $h= [v,y,s]$ with $v \in {}^I[\OO_K^{\times,>}]$, $y\in I(1-u_\theta)^{-1}$ and $s\in \C$.

\begin{proposition} Let $[v,y,s] \in \mathcal{G}^{(-)}_{\theta}$ with $v \in {}^I[\OO_K^{\times,>}]$, $y\in I(1-u_\theta)^{-1}$ and $s\in \C$. Let $(1-u_\theta)y = ax_1+bx_2$ and let the action of $v$ on $I$ (with respect to the $\Z$-basis $(x_1,x_2)$) be given by the matrix $M=(m_{ij})$. Then, the condition that $[v,y,s]$ is contained in the normalizer $\mathcal{N}(\Gamma)$ is equivalent to the following two statements:
\begin{flalign*}
1)&\ \ \ (v-1)e+y - \frac{m_{21}m_{22}vx_1-m_{12}m_{11}vx_2}{2} \in Ir^{-1}&&\\
2)&\ \ \ 2s + \chi((u_\theta-1)y, e-\frac{y}{2}) +\frac{\chi(ax_1,bx_2)}{2} \in \frac{\chi(I,I)}{r}&&
\end{flalign*}
\end{proposition}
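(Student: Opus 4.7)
The plan is to mirror the proof of proposition \ref{equ}, the only genuine novelty being that we now have $\Norm(u_\theta) = -1$ and the generator $g_0 = [u_\theta, 0, 0]$ carries no complex parameter $t$. As in that proof, the condition that $[v,y,s]$ normalizes $\Gamma$ is equivalent to asking that the conjugates $[v,y,s]\, g_i\, [v,y,s]^{-1}$ lie in $\Gamma$ for each $i \in \{0,1,2,3\}$; I would treat the four generators in turn.

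The generator $g_3$ is central, and a short direct computation gives $[v,y,s]\, g_3\, [v,y,s]^{-1} = g_3^{\Norm(v)}$, which always lies in $\langle g_3 \rangle \subseteq \Gamma$; this contributes no constraint. For $i \in \{1,2\}$, the conjugation involves only the nilpotent (Heisenberg) part and is blind to the form of $g_0$, so this step is word-for-word the one in proposition \ref{equ}. Multiplying the conjugate by $(g_1^{m_{i1}} g_2^{m_{i2}})^{-1}$ produces a central element whose third coordinate must lie in $\frac{\chi(x_1, x_2)}{r}\Z$; repackaging the two resulting scalar conditions (for $i = 1, 2$) via the pairing $\chi$ gives precisely condition 1).

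The substantive new calculation is with $g_0$. Applying the group law of $\mathcal{G}_\theta^{(-)}$ yields
\[
[v,y,s]\, g_0\, [v,y,s]^{-1} = \left[u_\theta,\ (1 - u_\theta) y,\ (1 - \Norm(u_\theta))\, s + \tfrac{1}{2}\chi(y, u_\theta y)\right],
\]
and specializing $\Norm(u_\theta) = -1$ turns the coefficient of $s$ into $2$, which is the slot previously occupied by $(\Norm(v)-1) t$ in the $S^{(+)}$ case. Writing $(1 - u_\theta) y = ax_1 + b x_2$ and cancelling by $(g_1^a g_2^b g_0)^{-1}$, the leftover is central and hence must lie in $\langle g_3 \rangle$. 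A brief rearrangement using the identity $\chi(y, u_\theta y)/2 - \chi((1-u_\theta)y, e) = \chi((u_\theta - 1)y, e - y/2)$ then rewrites the resulting scalar condition as condition 2).

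For the converse, I would verify as in proposition \ref{equ} that conditions 1) and 2) together force all four conjugates to lie in $\Gamma$; since $g_0 \mapsto g_3^{p_0} g_1^a g_2^b g_0$, $g_i \mapsto g_3^{p_i} g_1^{m_{i1}} g_2^{m_{i2}}$, and $g_3 \mapsto g_3^{\pm 1}$, these conjugates manifestly generate $\Gamma$. The main point to watch is a careful sign ledger: since $\Norm(u_\theta) = -1$ flips several intermediate terms, one has to check that the substitution $(\Norm(v)-1) t \leadsto 2s$ is the \emph{only} net change from the $S^{(+)}$ argument and that the remaining manipulations involving $\chi$ go through unchanged.
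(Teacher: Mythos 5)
Your proof is correct and is exactly the adaptation the paper intends: the paper omits the $S^{(-)}$ argument, deferring to the proof of proposition \ref{equ}, and your computation correctly identifies the only substantive change, namely that conjugating $g_0=[u_\theta,0,0]$ with $\Norm(u_\theta)=-1$ replaces the term $(\Norm(v)-1)t$ by $(1-\Norm(u_\theta))s=2s$, while the $g_1,g_2$ computations and the $\chi$-identity $\tfrac{1}{2}\chi(y,u_\theta y)-\chi((1-u_\theta)y,e)=\chi((u_\theta-1)y,e-\tfrac{y}{2})$ go through verbatim. The converse direction is also handled as in the paper, so nothing is missing.
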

\noindent
This implies that the automorphism group of $X$ is finite and is an extension of the form
$$\begin{tikzcd}[ampersand replacement=\&,cramped]
	1 \& {\Z/2\Z} \& {\text{Aut}(X)} \& Q \& 1
	\arrow[from=1-1, to=1-2]
	\arrow[from=1-2, to=1-3]
	\arrow[from=1-3, to=1-4]
	\arrow[from=1-4, to=1-5]
\end{tikzcd}$$
where $Q$ is again subgroup in 
$$\mathcal{H}_{\theta,[I]} = \frac{{}^I[\OO_K^{\times,>}]}{\langle u_\theta \rangle} \ltimes \frac{I(1-u_\theta)^{-1}}{I}$$
The computation of the group $Q$ for particular choices of initial data is done by similar methods to the $S^{(+)}$ case.


\begin{thebibliography}{1000}

\bibitem[Bog]{Bog}
\newblock F. A. Bogomolov,
\newblock {\em Classification of Surfaces of Class $VII_{0}$ with $b_2= 0$},
\newblock Math. USSR Izvestija, Vol. 10, No. 2, (1976), 255--269.

\bibitem[BoMo]{BM}
\newblock  S. Bochner, D. Montgomery, 
\newblock {\em Groups On Analytic Manifolds.}
\newblock Annals of Mathematics, vol. 48, No. 3, (1947), 659--69.

\bibitem[Con]{Con}
\newblock K. Conrad,
\newblock {\em The conductor of an ideal},
\newblock available at\\https://kconrad.math.uconn.edu/blurbs/gradnumthy/conductor.pdf

\bibitem[FuNa]{FN}
\newblock Y. Fujimoto, N. Nakayama,
\newblock {\em Compact complex surfaces admitting non-trivial surjective endomorphisms},
\newblock Tohoku Mathematical Journal, Tohoku Math. J. (2) 57(3), (2005), 395--426.

\bibitem[In]{Ino}
\newblock M. Inoue, 
\newblock {\em On surfaces of class ${\rm VII}_0$}, 
\newblock Invent. Math. {\bf 24},  (1974), 269--310.

\bibitem[KhTel]{KT}
\newblock Z. Khaled, A. Teleman,
\newblock {\em On the classification of Inoue surfaces},
\newblock arXiv: 2406.15158

\bibitem[Nam]{Nam}
\newblock M. Namba,
\newblock {\em Automorphism groups of Hopf surfaces},
\newblock Tohoku Mathematical Journal, Tohoku Math. J. (2) 26(1),(1974), 133--157.

\bibitem[OeMi]{OeMi}
\newblock  K. Oeljeklaus, C. Miebach,
\newblock {\em Compact complex non-{K}\"{a}hler manifolds associated with totally real reciprocal units},
\newblock Math. Zeitschrift {\bf 301},  (2022),  2747--2760.

\bibitem[PrSh]{PrSh}
\newblock Y. Prokhorov, C. Shramov,
\newblock {\em  Automorphism groups of Inoue and Kodaira surfaces},
\newblock Asian J. Math, Vol. 24, No. 2, (2020), 355--368.

\bibitem[Tel]{Tel}
\newblock A. Teleman,
\newblock {\em Projectively flat surfaces and Bogomolov's theorem on class $\text{VII}_0$ surfaces},
\newblock International Journal of Mathematics, 5, No. 2, (1994), 253--264.

\bibitem[Wa]{Wall}
\newblock  C.T.C. Wall,
\newblock {\em Geometric structures on compact complex analytic manifolds},
\newblock Topology {\bf 25}, (1986), 119--153.

\end{thebibliography}
\end{document}